\newtheorem{thm}{Theorem}[section]
\newtheorem{lem}[thm]{Lemma}
\newtheorem{prop}[thm]{Proposition}
\theoremstyle{definition}
\theoremstyle{remark}
\begin{document}
\title{Unitary super perfect numbers\footnote{2000 Mathematics 
Subject Classification: 11A05, 11A25.}
\footnote{Key words: Odd perfect numbers.  Super perfect numbers.  Unitary divisors.}}
\author{Tomohiro Yamada}
\date{}
\maketitle

\begin{abstract}
We shall show that 9, 165 are all of the odd unitary super perfect numbers.
\end{abstract}

\section{Introduction}\label{intro}
We denote by $\sigma(N)$ the sum of divisors of $N$.
$N$ is called to be perfect if $\sigma(N)=2N$.
It is a well-known unsolved problem whether or not
an odd perfect number exists.  Interest to this problem
has produced many analogous notions.

D. Suryanarayana \cite{Sur} called $N$ to be
super perfect if $\sigma(\sigma(N))=2N$.  It is asked in this paper
and still unsolved whether there were odd super perfect numbers.

A special class of divisors is the class of unitary divisors defined
by Cohen \cite{Coh}.
A divisor $d$ of $n$ is called a unitary divisor if $(d, n/d)=1$.
Then we write $d\mid\mid n$.
We denote by $\sigma^*(N)$ the sum of unitary divisors of $N$.
Replacing $\sigma$ by $\sigma^*$, Subbarao and Warren \cite{SW}
introduced the notion of a unitary perfect number.  $N$
is called to be unitary perfect if $\sigma^*(N)=2N$.  They proved
that there are no odd unitary perfect numbers.  Moreover,
Subbarao \cite{Sub} conjectured that there are only finitely many
unitary perfect numbers.

Combining these two notions, Sitaramaiah and Subbarao \cite{SS}
studied unitary super perfect (USP) numbers, integers $N$ satisfying
$\sigma^*(\sigma^*(N))=2N$.  They found all unitary super perfect
numbers below $10^8$.  The first ones are $2, 9, 165, 238$.  Thus
there are both even and odd USPs.  They proved that
another odd USP must have at least four distinct prime factors
and conjectured that there are only finitely many odd USPs.

The purpose of this paper is to prove this conjecture.  Indeed,
we show that the known two USPs are all.
\begin{thm}\label{th11}
If $N$ is an odd USP, then $N=9$ or $N=165$.
\end{thm}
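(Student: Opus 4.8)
The plan is to turn $\sigma^*(\sigma^*(N))=2N$ into a tractable exponential Diophantine problem by a parity argument, and then to dispose of the cases according to the number $\omega(N)$ of distinct prime factors of $N$.

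\textbf{Reduction.} Write $N=\prod_{i=1}^{k}r_i^{c_i}$ with $r_1<\cdots<r_k$ odd primes, so $\sigma^*(N)=\prod_{i=1}^{k}(1+r_i^{c_i})$ with every factor $1+r_i^{c_i}$ even; hence $2^k\mid\sigma^*(N)$. Put $\sigma^*(N)=2^a m$ with $m$ odd, so $a\ge k$. Since $\sigma^*(\sigma^*(N))=(1+2^a)\sigma^*(m)$ with $1+2^a$ odd while $2N\equiv2\pmod 4$, we get $\sigma^*(m)\equiv2\pmod 4$; but $2^{\omega(m)}\mid\sigma^*(m)$, so $\omega(m)\le1$. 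Thus $\sigma^*(N)=2^a$ or $\sigma^*(N)=2^a p^b$ for an odd prime $p$. The former is impossible, since then $2N=1+2^a$ would be odd. Therefore
\[
\sigma^*(N)=2^a p^b,\qquad 2N=(1+2^a)(1+p^b),
\]
and comparing the power of $2$ on each side (with $1+2^a$ odd) gives $p^b\equiv1\pmod 4$. In particular $1+2^a$ and $(1+p^b)/2$ both divide $N$, so every odd prime dividing $(1+2^a)(1+p^b)$ divides $N$; and one checks $2^a<N$ and $p^b<N$.

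\textbf{Small $\omega(N)$.} If $N=r^c$ ($k=1$), then $1+r^c=2^ap^b$ and $(1+2^a)(1+p^b)=2(2^ap^b-1)$, which after expanding simplifies to $(2^a-1)(p^b-1)=4$; since $2^a-1$ is odd it equals $1$, so $a=1$, $p^b=5$, $r^c=9$, i.e. $N=9$. For $k\ge2$, each $1+r_i^{c_i}$ divides $2^ap^b$, hence equals $2^{s_i}p^{t_i}$ with $s_i\ge1$, $\sum s_i=a$, $\sum t_i=b$. If $t_i=0$ then $1+r_i^{c_i}=2^{s_i}$, which forces $c_i=1$ (for even $c$, $1+r^c\equiv2\pmod 8$; for odd $c>1$, $1+r^c$ has an odd divisor exceeding $1$) and $r_i$ a Mersenne prime. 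On the other side $2N=(1+2^a)(1+p^b)$ forces $1+2^a=\prod_i r_i^{\beta_i}$ and $1+p^b=2\prod_i r_i^{\gamma_i}$ with $\beta_i+\gamma_i=c_i$. Feeding these back into $\prod_i(1+r_i^{c_i})=2^ap^b$ yields a closed system in $r_i,c_i,a,b,p$; for $k=2$ (no solution) and $k=3$ (only $N=165$) one resolves it by bounding the exponents and pinning down $p$, using Zsygmondy's theorem on primitive prime divisors of $r^c\pm1$ and $2^a\pm1$ together with elementary $2$- and $3$-adic congruences (for example $a$ odd forces $3\mid N$, and $p^b\equiv1\pmod 4$ restricts each $r_i$ modulo $4$), after which only finitely many cases remain. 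By the result of Sitaramaiah and Subbarao quoted above, it in fact suffices to rule out $k\ge4$.

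\textbf{Main obstacle.} The crux is the range $k\ge4$. Here $\sigma^*(N)/N<2$ by itself bounds nothing, since $N$ may absorb arbitrarily many large primes; all the leverage must come from the fact that $\sigma^*(N)$ has just the two prime divisors $2$ and $p$. The strategy is to pin down $p$ from a single relation $1+r_i^{c_i}=2^{s_i}p^{t_i}$, to propagate this through $1+2^a=\prod_i r_i^{\beta_i}\mid N$ and the size bounds $2^a<N$, $p^b<N$ (so that $a$ and $b$ are $O(\log N)$), and then to combine $a\ge k\ge4$ with $p^b\equiv1\pmod 4$ and the Mersenne restriction on the pure powers of $2$ among the $1+r_i^{c_i}$ to force a contradiction. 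Turning this into an explicit, genuinely finite list of cases — rather than a mere finiteness statement — is the technical heart of the argument, and it is here that primitive-divisor theorems and lifting-the-exponent estimates for $v_p(r^c-1)$ and $v_p(2^a-1)$ are indispensable.
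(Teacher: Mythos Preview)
Your reduction is correct and matches the paper exactly: $\sigma^*(N)=2^a p^b$ with $p^b\equiv 1\pmod 4$, and your treatment of $k=1$ via $(2^a-1)(p^b-1)=4$ is clean. The problem is everything after that. For $k=2,3$ you assert that ``one resolves it'' and defer to Sitaramaiah--Subbarao; for $k\ge 4$ you describe a \emph{strategy} (pin down $p$, propagate through $1+2^a\mid N$, invoke Zsigmondy and LTE) but then concede that ``turning this into an explicit, genuinely finite list of cases\ldots is the technical heart of the argument.'' That is not a proof; it is a statement of what remains to be done. Nothing in your outline explains how, for arbitrary $k\ge 4$, the system $1+r_i^{c_i}=2^{s_i}p^{t_i}$ collapses to finitely many checks, and LTE on $v_p(2^a-1)$ by itself gives no such bound.

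The paper organises the argument not by $k=\omega(N)$ but by the odd prime $q$ (your $p$) in $\sigma^*(N)=2^{f_1}q^{f_2}$, and the decisive idea you are missing is a uniform bound $a_i\le 2$ on the $2$-exponent in each non-Mersenne factor $1+p_i^{e_i}=2^{a_i}q^{b_i}$. This comes from quadratic-residue considerations: since $q^{f_2}\equiv 1\pmod 4$, every odd prime factor $p_i$ of $(2^{f_1}+1)(q^{f_2}+1)$ satisfies $p_i\not\equiv 7\pmod 8$ and $4q\nmid p_i+1$, which (together with Zsigmondy forcing $e_i=1$ in the non-Mersenne case) pins $a_i\in\{1,2\}$. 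Once $a_i\le 2$, the product $\sigma^*(N)/N$ is dominated by the Mersenne contribution $C<1.614$ times $\prod_{b\ge 1}\frac{2q^b}{2q^b-1}$, a convergent product in $q$; combined with $\sigma^*(\sigma^*(N))/\sigma^*(N)\le (1+2^{-f_1})(1+q^{-f_2})$ and $f_1\ge k\ge 3$, this forces $q\le 13$. Each of $q\in\{3,5,7,11,13\}$ is then eliminated by a sharper version of the same product estimate. Your proposal never isolates $a_i\le 2$, and without it there is no mechanism to bound $q$ or to make the case analysis finite.
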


Our proof is completely elementary.  The key point of our proof
is the fact that if $N$ is an odd USP, then $\sigma^*(N)$ must
be of the form $2^{f_1} q^{f_2}$, where $q$ is an odd prime.
This yields that if $p^e$ is an unitary divisor of $N$, then $p^e+1$
must be of the form $2^a q^b$.  Moreover, elementary theory of
cyclotomic polynomials and quadratic residues gives that
$a\leq 2$ or $b=0$.  Hence $p^e$ belongs a to very thin set.
Using this fact, we deduces that $q$ must be small.
For each small primes $q$, we show that $\sigma^*(\sigma^*(N))/N<2$
and therefore $N$ cannot be an USP unless $N=9, 165$,
with the aid of the fact that $f_1, f_2$ must be fairly large.
We sometimes use facts already stated in \cite{SS} but we shall present proofs
of these facts when proofs are omitted in \cite{SS}.

Our method does not seem to work to find all odd super perfect numbers.
Since $\sigma(\sigma(N))=2N$ does not seem to imply that $\omega(\sigma(N))\leq 2$.
Even assuming that $\omega(\sigma(N))\leq 2$, the property of $\sigma$ that
$\sigma(p^e)/p^e>1+1/p$ prevents us from showing that $\sigma(\sigma(N))<2$.
Nevertheless, with the aid of a theory of exponential diophantine equations,
we can show that for any given $k$, there are only finitely many odd super
perfect numbers $N$ with $\omega(\sigma(N))\leq k$.

\section{Preliminary Lemmas}\label{lemmas}
Let us denote by $v_p(n)$ the solution $e$ of $p^e||n$.
For distinct primes $p$ and $q$, we denote by $o_q(p)$
the exponent of $p \mod q$ and we define $a_q(p)=v_q(p^d-1)$,
where $d=o_q(p)$.
Clearly $o_q(p)$ divides $q-1$  and $a_q(p)$ is a positive
integer. Now we quote some elementary properties of $v_q(\sigma(p^x))$.
Lemmas \ref{lm21} is well-known.  Lemma \ref{lm21} has been proved by
Zsigmondy\cite{Zsi} and rediscovered by many authors
such as Dickson\cite{Dic} and Kanold\cite{Kan}.  See also Theorem 6.4A.1
in \cite{Sha}.

\begin{lem}\label{lm21}
If $a>b\ge 1$ are coprime integers, then $a^n-b^n$ has a prime factor
which does not divide $a^m-b^m$ for any $m<n$, unless $(a, b, n)=(2, 1, 6)$
or $a-b=n=1$, or $n=2$ and $a+b$ is a power of $2$.
\end{lem}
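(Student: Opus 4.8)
The plan is to run the classical cyclotomic argument going back to Zsigmondy and Birkhoff--Vandiver. For $n\ge 1$, put $\Phi_n(a,b)=b^{\varphi(n)}\Phi_n(a/b)=\prod_{1\le j\le n,\ (j,n)=1}(a-\zeta^{j}b)$, where $\zeta=e^{2\pi i/n}$; this is a positive integer, and $a^n-b^n=\prod_{d\mid n}\Phi_d(a,b)$. Call a prime $p$ a \emph{primitive} divisor of $a^n-b^n$ if $p\mid a^n-b^n$ but $p\nmid a^m-b^m$ for every $m<n$; since $(a,b)=1$ forces $p\nmid ab$ whenever $p\mid a^n-b^n$, the order $e$ of $ab^{-1}$ modulo $p$ is well defined, divides $n$, and $p$ is primitive exactly when $e=n$. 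A short manipulation of the product formula shows that if $p\mid\Phi_n(a,b)$ then $e\mid n$ and $n/e$ is a power of $p$; equivalently, every prime factor of $\Phi_n(a,b)$ that does \emph{not} divide $n$ is a primitive divisor of $a^n-b^n$, and conversely (since $e=n$ implies $n\mid p-1$, so $p>n$) every primitive divisor divides $\Phi_n(a,b)$. Thus the statement reduces to showing that for $n\ge 3$ the integer $\Phi_n(a,b)$ has a prime factor not dividing $n$, except when $(a,b,n)=(2,1,6)$; the cases $n=1,2$ will be treated by hand at the end.

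Next I would pin down the primes $p\mid\Phi_n(a,b)$ that do divide $n$. Such a $p$ has order $e<n$ with $e\mid p-1$, so every prime dividing $e$ is $<p$; since $n=ep^{k}$ with $k\ge 1$, the prime $p$ is the largest prime factor of $n$. Feeding the lifting-the-exponent identities --- $v_p\!\left(a^{p^{k}e}-b^{p^{k}e}\right)=v_p\!\left(a^{e}-b^{e}\right)+k$ for odd $p$, and the analogous $2$-adic identity --- into $a^n-b^n=\prod_{d\mid n}\Phi_d(a,b)$ yields $v_p(\Phi_n(a,b))=1$, with the single genuine exception $p=2$, $n=2$, where $\Phi_2(a,b)=a+b$ can be divisible by $4$. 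Hence, for $n\ge 3$, if $a^n-b^n$ has no primitive divisor then every prime of $\Phi_n(a,b)$ divides $n$, so $\Phi_n(a,b)$ equals $1$ or the largest prime factor $P$ of $n$; in either case
\[
\Phi_n(a,b)\le P\le n .
\]

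The contradiction comes from a lower bound for $\Phi_n(a,b)$. Pairing complex-conjugate roots, $\Phi_n(a,b)=\prod\bigl(a^{2}+b^{2}-2ab\cos(2\pi j/n)\bigr)$ over the $\varphi(n)/2$ representatives $1\le j<n/2$ with $(j,n)=1$; each factor is $\ge(a-b)^{2}\ge 1$, and is $\ge a^{2}-ab+b^{2}\ge 3$ whenever $\cos(2\pi j/n)\le 1/2$, i.e.\ whenever $n/6\le j<n/2$. The number of admissible $j$ in that range is $\varphi(n)/3-O(2^{\omega(n)})$, so $\Phi_n(a,b)\ge 3^{\,\varphi(n)/3-O(2^{\omega(n)})}$, which exceeds $n$ once $n$ is larger than an explicit bound. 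Comparing with $\Phi_n(a,b)\le n$ kills all but finitely many $n$; these, together with $n=1$ (where $\Phi_1(a,b)=a-b$ has a prime factor unless $a-b=1$) and $n=2$ (where, $a,b$ being then odd, every prime of $\Phi_2(a,b)=a+b$ divides $a-b$ precisely when $a+b$ is a power of $2$), are dispatched by direct inspection, which turns up exactly $a-b=n=1$, $n=2$ with $a+b$ a power of $2$, and $\Phi_6(2,1)=3$, i.e.\ $(a,b,n)=(2,1,6)$. The main obstacle is this last step: one must make the cyclotomic lower bound --- equivalently, the explicit threshold on $n$ --- tight enough that the residual finite check stays short, and then confirm that no triple beyond the three listed survives it.
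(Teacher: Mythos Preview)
The paper does not supply a proof of this lemma: it is quoted as Zsigmondy's theorem, with references to Zsigmondy, Dickson, Kanold, and Shapiro's textbook, and thereafter used as a black box throughout Section~2. Your sketch is the classical cyclotomic-polynomial argument that appears in those sources, and its outline is correct: the reduction of the question to the prime factorisation of $\Phi_n(a,b)$, the identification of any non-primitive prime divisor of $\Phi_n(a,b)$ as the largest prime $P\mid n$ together with $v_P(\Phi_n(a,b))=1$ for $n\ge 3$, and the separate disposal of $n=1,2$ are all accurate. The one place where genuine work remains is exactly the point you flag yourself --- turning the trigonometric lower bound $\Phi_n(a,b)\ge 3^{\varphi(n)/3-O(2^{\omega(n)})}$ into an explicit threshold on $n$ small enough that the residual finite check is manageable; the asymptotic is fine, but the implied constant has to be controlled (many expositions instead bound $\Phi_n(a,b)=\prod_{d\mid n}(a^{n/d}-b^{n/d})^{\mu(d)}$ via $a^{m-1}\le a^m-b^m\le a^m$, which makes the cutoff easier to compute). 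Since the paper itself offers no proof, there is nothing further to compare.
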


By Lemma \ref{lm21}, we obtain the following lemmas.

\begin{lem}\label{lm22}
Let $p$, $q$ be odd primes and $e$ be a positive integer.
If $p^e+1=2^aq^b$ for some integers $a$ and $b$, then
one of the following holds:

a)$e=1$.

b)$e$ is even and $q\equiv 1\pmod{2e}$.

c)$p$ is a Mersenne prime and $q\equiv 1\pmod{2e}$.
\end{lem}

\begin{proof}
We first show that if a) does not hold, then either b) or c) must hold.
Since $(p, e)\neq (2, 3)$ and $e\neq 1$, it follows from Lemma \ref{lm21}
that $p^{2e}-1$ has a prime factor $r$ which does not divide $p^m-1$ for any $m<2e$.
Since the order of $p\pmod{r}$ is $2e$, $r\equiv 1\pmod{2e}$.
Since $r$ is odd and does not divide $p^e-1$, we $r$ divides $p^e+1$ and
therefore $q=r$.

If $e$ is even, then b) holds.  Assume that $e$ is odd.  If $p+1$ has an odd prime factor,
then this cannot be equal to $q$ and must be a prime factor of $p^e+1=2^aq^b$,
which is contradiction.  Thus $p$ is a Mersenne prime and c) follows.
\end{proof}

\begin{lem}\label{lm23}
Let $p$ be an odd prime and $e$ be a positive integer.
If $p^e+1=2^a3^b$ for some integers $a$ and $b$, then $e=1$.
\end{lem}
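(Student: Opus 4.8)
The plan is to obtain Lemma~\ref{lm23} as an immediate consequence of Lemma~\ref{lm22} applied with $q=3$. Suppose $p^e+1=2^a3^b$ and, for contradiction, that $e>1$. Since $e\neq 1$, case~a) of Lemma~\ref{lm22} fails, so one of cases~b), c) must hold. Both of those cases assert $q\equiv 1\pmod{2e}$, that is, $3\equiv 1\pmod{2e}$; hence $2e$ divides $3-1=2$, which forces $e=1$. This contradicts $e>1$ (and, in case~b), also contradicts $e$ being even). Therefore $e=1$.

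If one prefers to argue directly from Lemma~\ref{lm21} rather than quoting Lemma~\ref{lm22}, the key steps are exactly those in the proof of Lemma~\ref{lm22}: with $e\ge 2$ and $p$ an odd prime, none of the exceptional triples of Lemma~\ref{lm21} arises for $(a,b,n)=(p,1,2e)$, so $p^{2e}-1$ has a primitive prime divisor $r$; then $o_r(p)=2e$ gives $2e\mid r-1$, and $r\nmid p^e-1$ forces $r\mid p^e+1=2^a3^b$. As $r$ is odd we must have $r=3$, whence $2e\mid r-1=2$ and $e=1$, a contradiction.

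I do not anticipate any real obstacle here: the entire content is the numerical coincidence that the prime $3$ is so small that the congruence $3\equiv 1\pmod{2e}$ already pins down $e=1$. The only point requiring a moment's care is confirming that Lemma~\ref{lm22} (equivalently Lemma~\ref{lm21}) genuinely applies, but this is automatic, since for an odd prime $p$ and $e\ge 2$ the exceptional situations $(p,1,2e)=(2,1,6)$, $p-1=2e=1$, and $2e=2$ are all impossible.
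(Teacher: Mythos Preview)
Your proof is correct and follows essentially the same route as the paper: apply Lemma~\ref{lm22} with $q=3$ and observe that the congruence $3\equiv 1\pmod{2e}$ forces $e=1$. The paper's version is just the one-line summary of your first paragraph; your additional direct argument via Lemma~\ref{lm21} and the verification of the exceptional cases are accurate but not needed beyond what the paper already records.
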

\begin{proof}
By Lemma \ref{lm22}, $e=1$ or $3\equiv 1\pmod{2e}$.  The latter is equivalent to $e=1$.
\end{proof}

\begin{lem}\label{lm24}
Let $p$, be an odd prime and $e, x$ be positive integers.
If $p^e+1=2^x$, then $e=1$.
\end{lem}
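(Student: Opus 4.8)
The plan is to show that no odd prime $p$ together with integers $e\ge 2$ and $x\ge 1$ can satisfy $p^e+1=2^x$; since the right-hand side is a pure power of $2$, it is enough to produce an odd prime divisor of $p^e+1$ whenever $e\ge 2$. I would argue according to the parity of $e$.

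If $e$ is even, then $p^{e/2}$ is odd, so $p^e\equiv 1\pmod 4$ and hence $p^e+1\equiv 2\pmod 4$; thus $2^x\equiv 2\pmod 4$, forcing $x=1$ and $p^e+1=2$, which is impossible for an odd prime $p$. So $e$ must be odd, and we may assume $e\ge 3$. Then $p+1$ divides $p^e+1$, and the cofactor $p^{e-1}-p^{e-2}+\cdots-p+1$ is a sum of $e$ odd numbers with $e$ odd, hence is itself odd, and it plainly exceeds $1$. Therefore $p^e+1$ has an odd divisor greater than $1$, contradicting $p^e+1=2^x$, and we conclude $e=1$.

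Alternatively, Lemma \ref{lm21} yields the same conclusion at one stroke: for $e\ge 2$ the data $a=p$, $b=1$, $n=2e$ avoid all the exceptional cases (as $p$ is odd and $2e\ge 4$), so $p^{2e}-1$ has a prime factor $r$ dividing no $p^m-1$ with $m<2e$; such an $r$ is odd (since $2\mid p^2-1$ and $2<2e$) and divides $p^e+1$ rather than $p^e-1$, which again contradicts $p^e+1=2^x$. I expect no real obstacle here: the only points to watch are the parity bookkeeping in the odd case and the routine verification that none of the Zsigmondy exceptions intervene. (One cannot simply invoke Lemma \ref{lm22} with $b=0$, since the proof of that lemma presupposes an odd prime actually dividing $p^e+1$.)
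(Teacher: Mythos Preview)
Your proof is correct. The paper's own argument is exactly your second alternative: apply Lemma~\ref{lm21} with $(a,b,n)=(p,1,2e)$ to produce an odd primitive prime divisor of $p^{2e}-1$, which must then divide $p^e+1$ and contradict $p^e+1=2^x$.

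Your primary argument, by contrast, is entirely elementary and avoids Zsigmondy. The even case is disposed of by the congruence $p^e+1\equiv 2\pmod 4$, and for odd $e\ge 3$ you exploit the factorization $p^e+1=(p+1)\cdot(p^{e-1}-p^{e-2}+\cdots+1)$, noting that the second factor is an odd integer exceeding~$1$. This is shorter and self-contained; the only advantage of the paper's route is uniformity with the neighbouring lemmas, which are all derived from Lemma~\ref{lm21} in the same pattern. Your closing remark about not being able to set $b=0$ in Lemma~\ref{lm22} is also apt, since that lemma's conclusion is phrased in terms of the odd prime $q$.
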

\begin{proof}
If $e>1$, then by Lemma \ref{lm21}, $p^{2e}-1$ has a prime factor which does not
divide $p^m-1$ for any $m<2e$.  This prime factor must be odd and divide
$p^e+1$, which violates the condition $p^e+1=2^x$.
\end{proof}

\begin{lem}\label{lm25}
Let $p$, be an odd prime and $e, x$ be positive integers.
If $2^x+1=3^e$, then $(e, x)=(1, 1)$ or $(2, 3)$.
\end{lem}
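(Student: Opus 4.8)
The plan is to rewrite the equation as $2^x = 3^e-1$ and apply Lemma~\ref{lm21} to the right-hand side, in the same spirit as the proof of Lemma~\ref{lm24}. First I would dispose of the small exponents by hand: if $e=1$ then $2^x=2$, so $x=1$; if $e=2$ then $2^x=8$, so $x=3$. These produce exactly the two claimed solutions $(e,x)=(1,1)$ and $(2,3)$.

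For the remaining case $e\ge 3$ I would argue by contradiction. Since $(3,1,e)\neq(2,1,6)$, since $3-1=2\neq 1$, and since $e\neq 2$, none of the exceptional cases of Lemma~\ref{lm21} applies, so $3^e-1$ has a prime factor $r$ that does not divide $3^m-1$ for any $m<e$. Then the multiplicative order of $3$ modulo $r$ is exactly $e$, whence $r\equiv 1\pmod{e}$; in particular $r\ge e+1\ge 4$, so $r$ is an odd prime. But $r\mid 3^e-1=2^x$ forces $r=2$, a contradiction. Hence $e\le 2$, and the lemma follows.

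The argument is entirely routine; the only point needing a little care is checking that for $e=2$ the equation genuinely has a solution, so that the case analysis is consistent. This is precisely the exceptional case ``$n=2$ and $a+b$ a power of $2$'' of Lemma~\ref{lm21} (here $a+b=4$), so Lemma~\ref{lm21} correctly does not exclude it, and one verifies $3^2-1=2^3$ directly. A Zsigmondy-free alternative would split on the parity of $e$: for even $e=2f$ one writes $2^x=(3^f-1)(3^f+1)$, where the two factors differ by $2$ and hence have $\gcd$ equal to $2$, forcing $3^f-1=2$ and so $f=1$; for odd $e$ one observes that $3^{e-1}+\cdots+3+1$ is a sum of an odd number of odd terms, hence odd, so $v_2(3^e-1)=1$ and thus $x=1$, $e=1$. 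I expect no real obstacle in either approach.
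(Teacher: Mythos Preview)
Your proof is correct and follows essentially the same approach as the paper: both rewrite the equation as $3^e-1=2^x$ and apply Lemma~\ref{lm21} with $(a,b,n)=(3,1,e)$ to produce, for $e>2$, a prime factor of $3^e-1$ not equal to $2$, contradicting $3^e-1=2^x$. The paper reaches ``$r\neq 2$'' directly from the fact that the primitive prime factor does not divide $3^1-1=2$, whereas you go via $r\equiv 1\pmod{e}$ and $r\ge e+1\ge 4$; this is a cosmetic difference only.
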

\begin{proof}
We apply Lemma \ref{lm21} with $(a, b, n)=(3, 1, e)$.  If $e>2$,
then $3^e-1$ has a prime factor which does not divide $3-1=2$.
\end{proof}

\begin{lem}\label{lm26}
If a prime $p$ divides $2^a+1$ for some integer $a$, then $p$ is congruent to
$1, 3$ or $5\pmod{8}$.
\end{lem}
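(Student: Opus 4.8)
The plan is to translate the divisibility condition into a statement about quadratic residues modulo $p$. Write the hypothesis as $2^a\equiv -1\pmod p$; we may assume $a\ge 1$, so that $2^a+1$ is odd and hence $p$ is an odd prime. I would then distinguish two cases according to the parity of $a$.

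If $a$ is even, say $a=2b$, then $(2^b)^2\equiv -1\pmod p$, so $-1$ is a quadratic residue modulo $p$; by the first supplementary law of quadratic reciprocity this forces $p\equiv 1\pmod 4$, that is, $p\equiv 1$ or $5\pmod 8$. If $a$ is odd, I would multiply the congruence by $2$ to obtain $2^{a+1}\equiv -2\pmod p$; since $a+1$ is even, $-2$ is a quadratic residue modulo $p$. Using $\left(\frac{-1}{p}\right)=(-1)^{(p-1)/2}$ together with $\left(\frac{2}{p}\right)=(-1)^{(p^2-1)/8}$, one checks that $\left(\frac{-2}{p}\right)=\left(\frac{-1}{p}\right)\left(\frac{2}{p}\right)=1$ precisely when $p\equiv 1$ or $3\pmod 8$.

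Putting the two cases together, in every case $p\equiv 1,3$ or $5\pmod 8$, which is exactly the claim (in particular $p\not\equiv 7\pmod 8$). I do not expect any genuine obstacle here, since the argument is a short exercise in elementary number theory; the only point needing a little care is the final computation determining which residue classes modulo $8$ make $-2$ a quadratic residue, but this is immediate once one multiplies the two supplementary laws.
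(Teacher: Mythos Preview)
Your proof is correct and matches the paper's argument essentially line for line: the paper also splits on the parity of $a$, uses $p\equiv 1\pmod 4$ in the even case, and in the odd case observes that $p\mid 2x^2+1$ with $x=2^{(a-1)/2}$ to get $\left(\frac{-2}{p}\right)=1$ and hence $p\equiv 1$ or $3\pmod 8$. Your way of seeing that $-2$ is a square (multiplying the congruence by $2$) is just a cosmetic variant of the paper's $2x^2+1$ formulation.
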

\begin{proof}
If $a$ is even, then it is well known that $p\equiv 1\pmod{4}$.  If $a$ is odd,
then $p$ divides $2x^2+1$ with $x=2^{(a-1)/2}$.  We have $(-2/p)=1$ and therefore
$p\equiv 1$ or $3\pmod{8}$.
\end{proof}

\begin{lem}\label{lm27}
Let $p$ and $q$ be odd primes and $b$ be a positive integer.
If a prime $p$ divides $q^b+1$ and $4$ does not divide $q^b+1$, then $4q$ does not
divide $p+1$.
\end{lem}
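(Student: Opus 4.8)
The plan is to argue by contradiction. Suppose $p\mid q^b+1$ and $4\nmid q^b+1$, but nonetheless $4q\mid p+1$. The last hypothesis I would immediately split into its two halves: $4\mid p+1$, i.e.\ $p\equiv 3\pmod 4$, and $q\mid p+1$, i.e.\ $p\equiv -1\pmod q$. (Note $p\neq q$, since otherwise $p\mid q^b+1$ would force $p\mid 1$, so every Legendre symbol appearing below is defined.)

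First I would exploit $p\equiv 3\pmod 4$, which gives $(-1/p)=-1$. Since $q^b\equiv -1\pmod p$, this yields $(q/p)^b=(q^b/p)=(-1/p)=-1$; because the left-hand side is $\pm1$, this forces \emph{both} that $b$ is odd \emph{and} that $(q/p)=-1$. Next I would bring in the hypothesis $4\nmid q^b+1$: if $q\equiv 3\pmod 4$, then $b$ odd would give $q^b\equiv 3\pmod 4$, hence $4\mid q^b+1$, a contradiction; therefore $q\equiv 1\pmod 4$.

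Finally I would apply quadratic reciprocity. Since $q\equiv 1\pmod 4$, we have $(p/q)=(q/p)=-1$, whereas $p\equiv -1\pmod q$ gives $(p/q)=(-1/q)=1$, again because $q\equiv 1\pmod 4$. This contradiction proves the lemma. There is no genuine obstacle here — the argument is three short steps — and the only point requiring care is the passage from $(q/p)^b=-1$ to ``$b$ odd and $(q/p)=-1$'', which is exactly why one must extract $p\equiv 3\pmod 4$ (so that $(-1/p)=-1$) before touching $q$. The hypothesis $4\nmid q^b+1$ enters solely to discard the case $q\equiv 3\pmod 4$; that case can in fact also be ruled out directly by reciprocity, but through a different sign computation, so routing it through the hypothesis keeps the proof uniform.
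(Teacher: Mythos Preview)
Your proof is correct and uses essentially the same ingredients as the paper's proof---Legendre symbols, the observation that $q\equiv 1\pmod 4$, and quadratic reciprocity. The only difference is organizational: the paper splits into cases on the parity of $b$ (the even case giving $p\equiv 1\pmod 4$ directly), whereas you assume the full contradiction hypothesis and recover ``$b$ odd'' from $(q/p)^b=-1$, which amounts to the same computation run in a different order.
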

\begin{proof}
If $b$ is even, then $p\equiv 1\pmod{4}$ and clearly $4q$ does not divide $p+1$.

If $b$ is odd, then we have $(-q/p)=1$ and $q\equiv 1\pmod{4}$.  Assume that
$q$ divides $p+1$.  Since $q\equiv 1\pmod{4}$, we have, by the reciprocity law,
$(-q/p)=(-1/p)(q/p)=(-1/p)(p/q)=(-1/p)(-1/q)=(-1/p)$.  Thus $(-1/p)=1$ and
$p\equiv 1\pmod{4}$ and therefore $4$ does not divide $p+1$.
\end{proof}

\section{Basic properties of odd USPs}
In this section, we shall show some basic properties of odd USPs.  

We write $N=p_1^{e_1} p_2^{e_2}\ldots p_k^{e_k}$, where $p_1, p_2, \ldots, p_k$ are
distinct primes.  Moreover, we denote by $C$ the constant
\begin{equation}
\prod_{p, 2^p-1\mathrm{\ is\ prime}}\frac{2^p}{2^p-1}<1.6131008.
\end{equation}
This upper bound follows from the following estimate:
\begin{equation}\label{eq31}
\begin{split}
\prod_{p, 2^p-1\mathrm{\ is\ prime}}\frac{2^p}{2^p-1}&<\frac{4}{3}\cdot\left(\prod_{n\ge 3, n\mathrm{\ is\ odd}}\frac{2^n}{2^n-1}\right)\\
&<\frac{4}{3}\cdot\exp\left(\sum_{n\ge 3, n\mathrm{\ is\ odd}}\frac{1}{2^n-1}\right)\\
&<\frac{4}{3}\cdot\exp\left(\frac{1}{7}\sum_{n\ge 0}\frac{1}{4^n}\right)\\
&=\frac{4}{3}\cdot\exp\left(\frac{4}{21}\right)=1.631007\cdots.\\
\end{split}
\end{equation}

\begin{lem}\label{lm31}
If $N$ is an odd USP, then $\sigma^*(N)=2^{f_1}q^{f_2}$ for some odd prime $q$ and
positive integers $f_1, f_2$.  Moreover, $q^{f_2}+1$ is not divisible by $4$.
\end{lem}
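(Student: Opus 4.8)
The plan is to track the $2$-adic valuation $v_2$ through two successive applications of $\sigma^*$. First I would dispose of $N=1$ (then $\sigma^*(\sigma^*(1))=1\neq 2$), so $N>1$. Writing $N=p_1^{e_1}\cdots p_k^{e_k}$ with all $p_i$ odd, we have $\sigma^*(N)=\prod_{i=1}^k(1+p_i^{e_i})$, and each factor $1+p_i^{e_i}$ is even. Hence $\sigma^*(N)$ is even, and I would write $\sigma^*(N)=2^{f_1}m$ with $m$ odd and $f_1\ge 1$.

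Next I would apply $\sigma^*$ again and invoke $\sigma^*(\sigma^*(N))=2N$. Since $2^{f_1}$ and $m$ are coprime, $\sigma^*(\sigma^*(N))=(1+2^{f_1})\,\sigma^*(m)$. The factor $1+2^{f_1}$ is odd because $f_1\ge 1$, so $v_2(\sigma^*(\sigma^*(N)))=v_2(\sigma^*(m))$. On the other hand $v_2(2N)=1$, because $N$ is odd. Therefore $v_2(\sigma^*(m))=1$.

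Now I would bound $v_2(\sigma^*(m))$ from below by the number of distinct prime factors of $m$: if $m=q_1^{a_1}\cdots q_j^{a_j}$ with the $q_i$ distinct odd primes, then $\sigma^*(m)=\prod_{i=1}^j(1+q_i^{a_i})$ is a product of $j$ even numbers, so $v_2(\sigma^*(m))\ge j$. Together with $v_2(\sigma^*(m))=1$ this forces $j\le 1$, while $j=0$ (that is, $m=1$) is excluded since then $\sigma^*(m)=1$ would have $v_2=0$. Hence $m=q^{f_2}$ for a single odd prime $q$ and some $f_2\ge 1$, which gives $\sigma^*(N)=2^{f_1}q^{f_2}$. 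Finally, reading off the $2$-adic valuation in $(1+2^{f_1})(1+q^{f_2})=2N$ once more: $1+2^{f_1}$ is odd, so $v_2(1+q^{f_2})=v_2(2N)=1$, i.e.\ $q^{f_2}+1$ is divisible by $2$ but not by $4$.

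There is no real obstacle here; the whole argument is bookkeeping of powers of $2$. The only point to state carefully is the elementary fact that $1+r$ is even for every odd prime power $r$, used twice (first to see that $\sigma^*(N)$ is even, then to see that each prime factor of $m$ contributes a factor $2$ to $\sigma^*(m)$), combined with the observation that the ``outer'' factor $1+2^{f_1}$ is odd and hence contributes nothing to $v_2$, against the constraint $v_2(2N)=1$ coming from $N$ being odd.
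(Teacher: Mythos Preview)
Your argument is correct and follows essentially the same route as the paper's proof: both observe that $\sigma^*(N)$ is even, that $\sigma^*(\sigma^*(N))=2N$ forces the odd part of $\sigma^*(N)$ to be a single prime power (since each additional odd prime-power factor would contribute an extra factor of $2$), and that $q^{f_2}+1\mid 2N$ gives $4\nmid q^{f_2}+1$. You have simply spelled out the $2$-adic bookkeeping more explicitly and handled the trivial case $N=1$, whereas the paper compresses the middle step into the single clause ``$\sigma^*(N)$ has exactly one odd prime factor.''
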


\begin{proof}
Since $N$ is odd, $\sigma^*(N)$ must be even.
Moreover, since $\sigma^*(\sigma^*(N))=2N$ with $N$ odd, $\sigma^*(N)$ has
exactly one odd prime factor.  Hence $\sigma^*(N)=2^{f_1}q^{f_2}$ for some
odd prime $q$ and positive integers $f_1, f_2$.  Since $\sigma^*(q^{f_2})=q^{f_2}+1$
divides $\sigma^*(\sigma^*(N))=2N$, $4$ does not divide $q^{f_2}+1$.
\end{proof}

Henceforth, we let $N\neq 9, 165$ be an odd USP and
write $\sigma^*(N)=2^{f_1}q^{f_2}$ as allowed by Lemma \ref{lm31}.

\begin{lem}\label{lm32}
Unless $p_i$ is a Mersenne prime and $e_i$ is odd, we have $p_i^{e_i}=2^{a_i}q^{b_i}-1$
for some positive integers $a_i$ and $b_i$ with $a_i\leq 2$.
Moreover, $f_1=\sum_{i=1}^{k} a_i$ and $f_2=\sum_{i=1}^{k} b_i$.
\end{lem}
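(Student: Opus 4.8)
The plan is to start from the multiplicativity of $\sigma^*$, which turns the equation $\sigma^*(N)=2^{f_1}q^{f_2}$ into
\[
2^{f_1}q^{f_2}=\prod_{i=1}^{k}\sigma^*(p_i^{e_i})=\prod_{i=1}^{k}(p_i^{e_i}+1).
\]
Each factor $p_i^{e_i}+1$ divides the right-hand side, so it has the form $2^{a_i}q^{b_i}$; since every $p_i$ is odd the factor $p_i^{e_i}+1$ is even, so $a_i\ge 1$, while $b_i\ge 0$. Comparing $2$-adic and $q$-adic valuations in the displayed identity immediately yields $f_1=\sum_i a_i$ and $f_2=\sum_i b_i$, which is the last assertion. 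It then remains to prove $a_i\le 2$ (and $b_i\ge 1$) whenever it is not the case that $p_i$ is a Mersenne prime with $e_i$ odd.

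First I would dispose of the possibility $b_i=0$: then $p_i^{e_i}+1=2^{a_i}$, so Lemma~\ref{lm24} forces $e_i=1$, and $p_i=2^{a_i}-1$ is a Mersenne prime with $e_i$ odd, which is the excluded case; so from now on $b_i\ge 1$. Next I split on the parity of $e_i$. If $e_i$ is even, then $p_i^{e_i}$ is an odd square, hence $\equiv 1\pmod 8$, so $p_i^{e_i}+1\equiv 2\pmod 8$ and $a_i=1$. If $e_i$ is odd and $e_i>1$, I apply Lemma~\ref{lm22} to $p_i^{e_i}+1=2^{a_i}q^{b_i}$: alternative (a) fails because $e_i\neq 1$ and alternative (b) fails because $e_i$ is odd, so alternative (c) holds and $p_i$ is a Mersenne prime with $e_i$ odd — again the excluded case. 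Thus the only remaining situation is $e_i=1$.

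In the case $e_i=1$ we have $p_i+1=2^{a_i}q^{b_i}$ with $b_i\ge 1$, and here I would bring in the outer equation. Since $\sigma^*(2^{f_1}q^{f_2})=(2^{f_1}+1)(q^{f_2}+1)$, the identity $\sigma^*(\sigma^*(N))=2N$ reads $(2^{f_1}+1)(q^{f_2}+1)=2N=2\prod_j p_j^{e_j}$, so the odd prime $p_i$ divides $2^{f_1}+1$ or $q^{f_2}+1$. In the first case Lemma~\ref{lm26} gives $p_i\equiv 1,3,5\pmod 8$, whence $a_i=v_2(p_i+1)\le 2$. In the second case $p_i\neq q$ (otherwise $q\mid q^{f_2}+1$), and $4\nmid q^{f_2}+1$ by Lemma~\ref{lm31}, so Lemma~\ref{lm27} gives $4q\nmid p_i+1=2^{a_i}q^{b_i}$; since $q\mid p_i+1$ this forces $a_i\le 1$. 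In every non-excluded case we obtain $a_i\le 2$ with $b_i\ge 1$, completing the proof.

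The step I expect to be the crux is precisely the case $e_i=1$: Lemma~\ref{lm22} says nothing there, so one must use the global factorization $2N=(2^{f_1}+1)(q^{f_2}+1)$ to locate $p_i$ as a divisor of $2^{f_1}+1$ or of $q^{f_2}+1$ and then invoke the quadratic-residue Lemmas~\ref{lm26} and~\ref{lm27}; lining up the hypotheses of Lemma~\ref{lm27} — namely $4\nmid q^{f_2}+1$, supplied by Lemma~\ref{lm31}, together with $q\mid p_i+1$, which is where $b_i\ge 1$ is used — is the delicate point.
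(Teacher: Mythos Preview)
Your proof is correct and follows essentially the same route as the paper's own proof: write $p_i^{e_i}+1=2^{a_i}q^{b_i}$, use Lemma~\ref{lm24} to rule out $b_i=0$, use parity of $e_i$ together with Lemma~\ref{lm22} to reduce to $e_i=1$, and then invoke Lemmas~\ref{lm26} and~\ref{lm27} via the factorization $2N=(2^{f_1}+1)(q^{f_2}+1)$ to force $a_i\le 2$. Your write-up is in fact more explicit than the paper's at the crux you identified---the paper simply says ``By Lemma~\ref{lm26} and~\ref{lm27}, we have $a_i\le 2$ since $q^{f_2}+1$ is not divisible by $4$'' without spelling out the dichotomy $p_i\mid 2^{f_1}+1$ versus $p_i\mid q^{f_2}+1$---so your version reads as a faithful expansion of the intended argument.
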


\begin{proof}
Since $\sigma^*(p_i^{e_i}+1)$ divides $\sigma^*(N)=2^{f_1}q^{f_2}$,
we can write $p_i^{e_i}+1=2^{a_i}q^{b_i}$ with some nonnegative integers
$a_i$ and $b_i$.  Since $p_i$ is odd and non-Mersenne, $a_i$ and $b_i$
are positive by Lemma \ref{lm24}.

If $e_i$ is even, then $p_i^{e_i}+1\equiv 2\pmod{4}$.  Hence $a_i=1$.

Assume that $p_i$ is not a Mersenne prime and $e_i$ is odd.
By Lemma \ref{lm22}, we have $e_i=1$ and therefore
$p_i=p_i^{e_i}=2^{a_i}q^{b_i}-1$.  By Lemma \ref{lm26} and \ref{lm27},
we have $a_i\leq 2$ since $q^{f_2}+1$ is not divisible by $4$.  This shows
$a_i\leq 2$.  The latter part of the lemma immediately follows from
$2^{f_1}q^{f_2}=\sigma^*(N)=\prod (p_i^{e_i}+1)$.
\end{proof}

\begin{lem}\label{lm34}
$\omega(N)\geq 3$.
\end{lem}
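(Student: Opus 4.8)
The plan is to rule out $\omega(N)=1$ and $\omega(N)=2$ directly, relying on Lemma~\ref{lm31} (so $\sigma^*(N)=2^{f_1}q^{f_2}$ with $4\nmid q^{f_2}+1$, hence $q^{f_2}\equiv 1\pmod 4$) and on Lemma~\ref{lm32}.

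Suppose first $\omega(N)=1$, say $N=p^e$. Then $p^e+1=\sigma^*(N)=2^{f_1}q^{f_2}$, while $(2^{f_1}+1)(q^{f_2}+1)=\sigma^*(\sigma^*(N))=2N=2(2^{f_1}q^{f_2}-1)$. Expanding and cancelling, this collapses to $(2^{f_1}-1)(q^{f_2}-1)=4$; since $f_1,f_2\ge 1$ and $q\ge 3$, the only possibility is $f_1=1$ and $q^{f_2}=5$, forcing $p^e=9$ and $N=9$, contrary to the standing assumption.

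Suppose next $\omega(N)=2$, say $N=AB$ with $A=p_1^{e_1}$ and $B=p_2^{e_2}$, and put $x=2^{f_1}$, $y=q^{f_2}$. Comparing $\sigma^*(N)=(A+1)(B+1)=xy$ with $\sigma^*(\sigma^*(N))=(x+1)(y+1)=2AB$ and eliminating gives $2AB=(x+1)(y+1)$ together with $(A-1)(B-1)=x+y+3$. Dividing $(A+1)(B+1)=xy$ by the latter identity and using $A,B\ge 3$ (so $\frac{A+1}{A-1}\le 2$ and $\frac{B+1}{B-1}\le 2$) yields $xy\le 4(x+y+3)$, i.e.\ $(x-4)(y-4)\le 28$. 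Moreover $A+1$ and $B+1$ are both even, so $4\mid\sigma^*(N)$ and $x\ge 4$; and $q^{f_2}\equiv 1\pmod 4$ gives $y\ge 5$. These constraints leave only $x=4$ together with the four pairs $(x,y)\in\{(8,5),(8,9),(16,5),(32,5)\}$.

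For each of those four pairs the identities above determine $AB$ and $A+B$ explicitly, so $A$ and $B$ are the roots of an explicit monic quadratic; in every case either its discriminant is not a perfect square or the two roots are powers of one and the same prime, both of which are impossible. It remains to handle $x=4$, i.e.\ $f_1=2$. Then Lemma~\ref{lm32} forces $a_1=a_2=1$, so $A=2q^{b_1}-1$ and $B=2q^{b_2}-1$ with $b_1,b_2\ge 1$; substituting into $(A-1)(B-1)=q^{f_2}+7$ and reducing modulo $q$ forces $q=3$, after which a brief computation modulo $3$ pins down $b_1=b_2=1$, whence $A=B=5$, contradicting $p_1\ne p_2$. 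The main obstacle is precisely this case $x=2^{f_1}=4$, where $(x-4)(y-4)\le 28$ gives no bound on $y=q^{f_2}$, so one must instead exploit the rigid factorisation $p_i^{e_i}+1=2q^{b_i}$ forced by $f_1=2$ and argue purely with congruences; the other cases are a routine inspection of four quadratics.
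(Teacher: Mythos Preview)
Your argument is correct and takes a genuinely different route from the paper. The paper bounds $\sigma^*(\sigma^*(N))/N$ multiplicatively: for $\omega(N)=1$ it gets $N\le 9$, and for $\omega(N)=2$ it gets $N<37$, then disposes of those ranges by inspection. You instead set up the exact Diophantine identities $(x-1)(y-1)=4$ in the one-prime case and $(A-1)(B-1)=x+y+3$, $(A+1)(B+1)=xy$ in the two-prime case, and solve them directly. Your bound $(x-4)(y-4)\le 28$ together with $x\ge 4$, $y\ge 5$ cleanly isolates the four sporadic pairs plus the degenerate line $x=4$; the four quadratics behave exactly as you say (for $(x,y)=(8,5)$ one finds $\{A,B\}=\{3,9\}$, both powers of $3$; the other three have non-square discriminant), and in the $x=4$ case the factorisation $p_i^{e_i}+1=2q^{b_i}$ is forced simply because $a_1+a_2=f_1=2$ with each $a_i\ge 1$, after which your congruence argument goes through (strictly speaking it is reduction modulo $9$, not modulo $3$, that pins down $b_1=b_2=1$, but the computation is the one you describe). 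The paper's approach is shorter and in the same spirit as the analytic estimates used throughout Sections~4--5; yours avoids any ``check all $N<37$'' step and makes the two exceptional solutions $N=9$ and the near-miss $N=3\cdot 9$ visible as the only integral points.
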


\begin{proof}
First we assume that $N=p_1^{e_1}$.  Since we have
$\sigma^*(N)/N=1+1/N$ and $\sigma^*(\sigma^*(N))/\sigma^*(N)\leq (1+1/2)(1+2/N)$
by Lemma \ref{lm31},
we have $N\leq 9$.  We can easily confirm that $N=9$ is the sole odd USP with
$N\leq 9$.

Next we assume that $N=p_1^{e_1}p_2^{e_2}$.  Since we have
$\sigma^*(N)/N\leq (1+1/3)(1+3/N)$ and $\sigma^*(\sigma^*(N))/\sigma^*(N)\leq (1+1/4)(1+4/N)$,
we have $N<37$.  We can easily confirm that there is no odd USP $N$ with
$N<37$ and $\omega(N)=2$.

Another proof of impossibility of $\omega(N)=1$ unless $N=2, 9$ (whether $N$ is even or odd)
can be found in \cite[Theorem 3.2]{SS} and impossibility of $\omega(N)=2$ (again, $N$ may be even)
is stated in \cite[Theorem 3.3]{SS} with their proof presented only in the case $N$ is even.
\end{proof}

\section{$q$ cannot be $3$}

In this section, we show that $q\neq 3$.
There are two cases: the case $3\mid N$ and the case $3\nmid N$.

\begin{prop}\label{pr41}
If $3\nmid N$ and $3\mid \sigma^*(N)$, then $f_1$ and $f_2$ are even, $p_i$ has
the form $2\cdot 3^{b_i}-1$ with positive integers $b_i$.
\end{prop}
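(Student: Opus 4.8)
The plan is to run everything off the identity $\sigma^*(\sigma^*(N))=2N$ together with the lemmas established so far. First note that $3\mid\sigma^*(N)=2^{f_1}q^{f_2}$ forces $q=3$, so $\sigma^*(N)=2^{f_1}3^{f_2}$ and hence
\[
2N=\sigma^*(\sigma^*(N))=\sigma^*(2^{f_1}3^{f_2})=(2^{f_1}+1)(3^{f_2}+1).
\]
The parity of $f_2$ is already recorded: Lemma~\ref{lm31} gives $4\nmid 3^{f_2}+1$, and $4\mid 3^{f_2}+1$ exactly when $f_2$ is odd, so $f_2$ is even. For $f_1$ I would use the hypothesis $3\nmid N$: then $3$ divides neither $2N$ nor $3^{f_2}+1$, so $3\nmid 2^{f_1}+1$; since $3\mid 2^{f_1}+1$ exactly when $f_1$ is odd, $f_1$ must be even as well.

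Next I would determine each $p_i$. Because $\sigma^*(p_i^{e_i})=p_i^{e_i}+1$ divides $\sigma^*(N)=2^{f_1}3^{f_2}$, we may write $p_i^{e_i}+1=2^{a_i}3^{b_i}$ with $a_i\ge 1$ and $b_i\ge 0$; Lemma~\ref{lm23} then gives $e_i=1$, i.e.\ $p_i+1=2^{a_i}3^{b_i}$. To pin down $a_i=1$, observe that $p_i$ divides $N$, hence divides $2N=(2^{f_1}+1)(3^{f_2}+1)$, so, being an odd prime, it divides one of the two factors; since both $f_1$ and $f_2$ are even, that factor is of the form $m^2+1$ with $m=2^{f_1/2}$ or $m=3^{f_2/2}$, whence $(-1/p_i)=1$ and $p_i\equiv 1\pmod{4}$. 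Consequently $p_i+1\equiv 2\pmod{4}$, which forces $a_i=1$; thus $p_i=2\cdot 3^{b_i}-1$, and $b_i\ge 1$ because $p_i\equiv 1\pmod{4}$ means $p_i\ge 5$.

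I do not anticipate a serious obstacle: the argument is a short chain of divisibility and congruence observations built on the factorization $2N=(2^{f_1}+1)(3^{f_2}+1)$ together with Lemmas~\ref{lm31} and~\ref{lm23}. The only place the hypothesis $3\nmid N$ enters is the deduction that $f_1$ is even, and the one nontrivial input is the classical fact (the even-exponent case of Lemma~\ref{lm26}) that a prime dividing $m^2+1$ is congruent to $1\pmod{4}$.
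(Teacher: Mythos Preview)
Your proof is correct and follows essentially the same route as the paper: deduce $q=3$, use Lemma~\ref{lm31} to get $f_2$ even, use $3\nmid N$ to get $f_1$ even, invoke Lemma~\ref{lm23} to get $e_i=1$, and then use $p_i\equiv 1\pmod 4$ to force $a_i=1$. The one minor difference is that the paper concludes only ``$a_i$ odd'' from $p_i\equiv 1\pmod 4$ and then appeals to Lemma~\ref{lm32} ($a_i\le 2$) to finish, whereas you observe directly that $p_i+1\equiv 2\pmod 4$ already forces $a_i=1$, which is a small but genuine simplification.
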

\begin{proof}
We have $e_i=1$ by Lemma \ref{lm23}.
Thus any $p_i$ must be of the form $2^{a_i}\cdot 3^{b_i}-1$ with
nonnegative integers $a_i, b_i$.
Since $3^{f_2}+1$ is not divisible by $4$, $f_2$ must be even.
Since $3$ does not divide $2^{f_1}+1$, $f_1$ must also be even.
By Lemma \ref{lm26}, any prime factor of $N$ is congruent to $1\pmod{4}$
and therefore $a_i$ must be odd.
By Lemma \ref{lm32}, we have $a_i=1$.
\end{proof}

Hence we have $p_i\in\{5, 17, 53, 4373, \ldots\}$.

\begin{lem}\label{lm42}
If $3\mid \sigma^*(N)$, then $3\mid N$.
\end{lem}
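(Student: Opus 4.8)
The plan is to argue by contradiction: assume $3\mid\sigma^*(N)$ but $3\nmid N$, and derive $\sigma^*(\sigma^*(N))/N<2$, which contradicts $\sigma^*(\sigma^*(N))=2N$. Since $\sigma^*(N)=2^{f_1}q^{f_2}$ with $q$ an odd prime, the assumption $3\mid\sigma^*(N)$ forces $q=3$. Then Proposition~\ref{pr41} applies, its hypotheses $3\nmid N$ and $3\mid\sigma^*(N)$ both holding: it gives that $f_1$ and $f_2$ are even — hence $f_1\ge 2$ and $f_2\ge 2$ — and that every prime factor $p_i$ of $N$ is of the form $2\cdot 3^{b_i}-1$ with $b_i\ge 1$.

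The core of the proof is a size estimate exploiting how thin the family $S$ of primes of the form $2\cdot 3^{b}-1$ is. Inspecting small $b$, the smallest members of $S$ are $5,17,53$ (for $b=4,5,6$ one gets the composites $161=7\cdot 23$, $485=5\cdot 97$, $1457=31\cdot 47$), and $2\cdot 3^{b}-1>4373$ for $b\ge 7$. Since the $p_i$ are distinct, and using $1+1/p_i^{e_i}\le 1+1/p_i$ together with $2\cdot 3^{b}-1>3^{b}$, I would bound, in the style of the estimate \eqref{eq31} for $C$,
\[
\frac{\sigma^*(N)}{N}\le\prod_{p\in S}\left(1+\frac 1p\right)<\frac 65\cdot\frac{18}{17}\cdot\frac{54}{53}\cdot\exp\left(\sum_{b\ge 7}3^{-b}\right)<1.296 .
\]
Combining this with $\sigma^*(\sigma^*(N))/\sigma^*(N)=(1+2^{-f_1})(1+3^{-f_2})\le(1+\tfrac14)(1+\tfrac19)=\tfrac{25}{18}$, which uses that $f_1,f_2$ are even, yields $\sigma^*(\sigma^*(N))/N<1.296\cdot\tfrac{25}{18}=1.8<2$, the desired contradiction; hence $3\mid N$.

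The argument is short and there is no genuine obstacle once Proposition~\ref{pr41} is in hand; the only point requiring care is that the numerical bound truly falls below $2$. The margin (about $1.8$) is comfortable, but it leans on both inputs above: the parity of $f_1$ and $f_2$ (so that one may use $2^{-f_1}\le\tfrac14$ and $3^{-f_2}\le\tfrac19$ rather than $\tfrac12,\tfrac13$), and the compositeness of $2\cdot 3^{b}-1$ for $b=4,5,6$, which is what keeps the tail of $\prod_{p\in S}(1+1/p)$ under control. If one wanted an even safer margin one could instead bound $\prod_{p\in S}(1+1/p)<\tfrac65\cdot\tfrac{18}{17}\cdot\exp(\sum_{b\ge 3}3^{-b})$, which still stays below $2$ after multiplying by $\tfrac{25}{18}$.
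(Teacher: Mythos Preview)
Your proof is correct and follows essentially the same route as the paper: assume $3\nmid N$, invoke Proposition~\ref{pr41}, bound $\sigma^*(N)/N$ via the thin set of primes $2\cdot 3^b-1$, bound $\sigma^*(\sigma^*(N))/\sigma^*(N)$ from above, and multiply. The only difference is that you use merely the evenness of $f_1,f_2$ (so $f_1,f_2\ge 2$), whereas the paper invokes Lemma~\ref{lm34} ($k\ge 3$) together with $a_i=1$ and the distinctness of the $b_i$ to obtain $f_1\ge 3$ and $f_2\ge 1+2+3=6$, yielding the sharper factor $\tfrac{9}{8}\cdot\tfrac{730}{729}$ and a final bound near $1.46$; your cruder $\tfrac{25}{18}$ still lands safely below $2$, so nothing is lost.
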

\begin{proof}
Suppose $3\mid \sigma^*(N)$ and $3\nmid N$.
By Proposition \ref{pr41}, we have
\begin{equation}
\frac{\sigma^*(N)}{N}\leq\frac{6}{5}\cdot\frac{18}{17}\cdot\frac{54}{53}\cdot\left(\prod_{i=7}^{\infty}\frac{2\cdot 3^i}{2\cdot 3^i-1}\right).
\end{equation}
Since
\begin{equation}
\prod_{i=7}^{\infty}\frac{2\cdot 3^i}{2\cdot 3^i-1}\leq\exp\sum_{i=7}^{\infty}\frac{1}{2\cdot 3^i-1}\leq\linebreak[0]\exp\left(\frac{1}{2\cdot 3^7-1}\sum_{i=0}^{\infty}3^{-i}\right),
\end{equation}
we have
\begin{equation}\label{eq41}
\frac{\sigma^*(N)}{N}<\frac{6}{5}\cdot\frac{18}{17}\cdot\frac{54}{53}\cdot\exp\left(\frac{3}{8744}\right).
\end{equation}

Since $k\geq 3$ by Lemma \ref{lm34}, we have $f_1=k\geq 3$ and $f_2\geq 3+2+1=6$.
Thus we obtain
\begin{equation}\label{eq42}
\frac{\sigma^*(\sigma^*(N))}{\sigma^*(N)}\leq\frac{9}{8}\cdot\frac{730}{729}.
\end{equation}
Multiplying (\ref{eq41}) and (\ref{eq42}), we obtain
\begin{equation}
2=\frac{\sigma^*(\sigma^*(N))}{N}<\frac{9}{8}\cdot\frac{730}{729}\cdot
\frac{6}{5}\cdot\frac{18}{17}\cdot\frac{54}{53}\cdot\exp\left(\frac{3}{8744}\right)=1.4588\cdots<2,
\end{equation}
which is contradiction.
\end{proof}

\begin{lem}\label{lm43}
It is impossible that $3\mid N$ and $3\mid \sigma^*(N)$.
\end{lem}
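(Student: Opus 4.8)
\emph{Proof plan.} The hypothesis $3\mid\sigma^*(N)=2^{f_1}q^{f_2}$ forces $q=3$, so $\sigma^*(N)=2^{f_1}3^{f_2}$, and since $4\nmid 3^{f_2}+1$ by Lemma~\ref{lm31}, $f_2$ is even; as $f_2>0$ this gives $f_2\ge 2$. I would first pin down the shape of $N$. Writing $p_1=3$, the exact power $3^{e_1}\mid\mid N$ has $3^{e_1}+1$ dividing $\sigma^*(N)$ and coprime to $3$, so $3^{e_1}+1=2^{a_1}$ and $e_1=1$ by Lemma~\ref{lm24}; thus $3\mid\mid N$. By Lemma~\ref{lm23} every other exact prime power $p_i^{e_i}\mid\mid N$ also has $e_i=1$, so $N=3p_2\cdots p_k$ is squarefree with each $p_i+1=2^{a_i}3^{b_i}$, and Lemma~\ref{lm32} gives $f_1=2+\sum_{i\ge 2}a_i$ and $f_2=\sum_{i\ge 2}b_i$. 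From $2N=\sigma^*(\sigma^*(N))=(2^{f_1}+1)(3^{f_2}+1)$, together with $3\mid\mid N$ and $3\nmid 3^{f_2}+1$, we get $3\mid\mid 2^{f_1}+1$; in particular $f_1$ is odd, and since $k\ge 3$ by Lemma~\ref{lm34}, this forces $f_1\ge 5$.

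The crux — and what I expect to be the main obstacle — is to rule out any Mersenne prime factor of $N$ other than $3$. Suppose $M=2^c-1$ is a Mersenne prime dividing $N$ with $M\ne 3$, so that $c$ is an odd prime; then $M$ divides $2N=(2^{f_1}+1)(3^{f_2}+1)$. Since $o_M(2)=c$ is odd, $M\nmid 2^{f_1}+1$ (otherwise $c\mid 2f_1$ although $c\nmid f_1$), so $M\mid 3^{f_2}+1$. But $M-1=2(2^{c-1}-1)$ with $2^{c-1}-1$ odd, so $v_2(M-1)=1$, while $M\mid 3^{f_2}+1$ forces $o_M(3)$ to be even; as $o_M(3)\mid M-1$, this gives $o_M(3)=2s$ with $s$ odd, and hence $f_2$ is an odd multiple of $s$ — odd, contradicting $f_2$ even. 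Therefore, by Lemma~\ref{lm32}, every prime factor of $N$ other than $3$ lies in the thin set $\mathcal S=\{\,2^a3^b-1\ \text{prime}:a\in\{1,2\},\ b\ge 1\,\}=\{5,11,17,53,107,971,4373,\dots\}$, and in particular each $b_i\ge 1$. Without this, the final estimate does not close, since permitting Mersenne factors can push $\sigma^*(N)/N$ beyond $2$.

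Next, the case $f_2=2$: since each $b_i\ge 1$ and $k\ge 3$, the relation $2=f_2=\sum_{i\ge 2}b_i$ forces $k=3$ and $b_2=b_3=1$; then $p_i+1=2^{a_i}\cdot 3$ with $1\le a_i\le 2$ for $i=2,3$, so $p_2,p_3\in\{5,11\}$, giving $\{p_2,p_3\}=\{5,11\}$ and $N=3\cdot 5\cdot 11=165$, contrary to $N\ne 165$.

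Finally, assume $f_2\ge 4$ (and recall $f_1\ge 5$). From the shape of $N$,
\[
\frac{\sigma^*(N)}{N}=\frac43\prod_{i\ge 2}\frac{p_i+1}{p_i}\le\frac43\prod_{p\in\mathcal S}\frac{p+1}{p},
\]
and a short computation — taking the factors with $p\le 107$ explicitly (using, e.g., $35=5\cdot 7$, $161=7\cdot 23$, $323=17\cdot 19$, so that the next primes in $\mathcal S$ occur only much later) and bounding the tail by a geometric series as in~(\ref{eq31}) — yields $\prod_{p\in\mathcal S}\frac{p+1}{p}<1.43$, hence $\sigma^*(N)/N<\tfrac43\cdot 1.43<1.91$. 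Then
\[
2=\frac{\sigma^*(\sigma^*(N))}{N}=\frac{\sigma^*(N)}{N}\,(1+2^{-f_1})(1+3^{-f_2})<1.91\cdot\frac{33}{32}\cdot\frac{82}{81}<2,
\]
a contradiction, completing the proof.
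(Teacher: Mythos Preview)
Your proof is correct. The overall architecture matches the paper's---show $q=3$, force $N$ squarefree via Lemma~\ref{lm23}, exclude Mersenne factors other than $3$, confine the remaining primes to the thin set $\{2^a3^b-1:a\le 2,\ b\ge1\}$, and close with a product estimate---but two ingredients differ in a way worth noting.

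First, to exclude Mersenne primes $M=2^c-1>3$ from $N$, the paper argues by congruences: such $M$ satisfy $M\equiv 7\pmod 8$, so Lemma~\ref{lm26} rules them out of $2^{f_1}+1$, while $f_2$ even makes every odd prime factor of $3^{f_2}+1$ congruent to $1\pmod 4$, ruling them out there too. Your order argument ($o_M(2)=c$ odd, and $v_2(M-1)=1$ forcing $o_M(3)=2s$ with $s$ odd, hence $f_2$ odd) reaches the same conclusion by a different and equally clean route.

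Second, for the final inequality the paper imports the bound $k\ge 46$ from \cite[Theorem~3.4]{SS}, which immediately gives $f_1\ge 46$ and $f_2\ge 45$ and makes the estimate close with room to spare ($<1.905$). You instead dispose of $f_2=2$ by hand (recovering $N=165$) and then work with the much weaker $f_1\ge 5$, $f_2\ge 4$; your product bound $\prod_{p\in\mathcal S}(p+1)/p<1.43$ is accurate (a careful tail estimate gives about $1.428$), and the resulting inequality $\tfrac{4}{3}\cdot 1.43\cdot\tfrac{33}{32}\cdot\tfrac{82}{81}<2$ does close, though only narrowly. The payoff is that your argument is self-contained and does not depend on the external result from \cite{SS}.
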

\begin{proof}
Suppose $3\mid N$ and $3\mid \sigma^*(N)$.  We have $e_i=1$ by Lemma \ref{lm23}.
By Lemma \ref{lm26}, $2^{a_i}+1$ is divisible by no Mersenne prime other than 3.
Since $3^{b_i}+1$ cannot be divisible by 4, $b_i$ must be odd and therefore
$3^{b_i}+1$ is divisible by no Mersenne prime.  Hence it follows from Lemma \ref{lm32}
that any $p_i$ must be of the form $2^{a_i}\cdot 3^{b_i}-1$, where $a_i\leq 2$ and $b_i$
are positive integers.
Hence $p_i\in\{5, 11, 17, 53, 107, 971, 4373, \ldots\}$.

Thus we obtain
\begin{equation}
\frac{\sigma^*(N)}{N}\leq\frac{4}{3}\cdot\frac{6}{5}\cdot\frac{12}{11}\cdot\frac{18}{17}\cdot\frac{54}{53}\cdot\frac{108}{107}\cdot\left(\prod_{i=7}^{\infty}\frac{2\cdot 3^i}{2\cdot 3^i-1}\right)
\cdot\left(\prod_{i=5}^{\infty}\frac{4\cdot 3^i}{4\cdot 3^i-1}\right).
\end{equation}
As in the proof of the previous lemma, substituting the inequality
\begin{equation}
\prod_{i=5}^{\infty}\frac{4\cdot 3^i}{4\cdot 3^i-1}\leq\exp\left(\frac{1}{4\cdot 3^5-1}\sum_{i=0}^{\infty}3^{-i}\right)
\end{equation}
we have
\begin{equation}\label{eq43}
\frac{\sigma^*(N)}{N}\leq\frac{4}{3}\cdot\frac{6}{5}\cdot\frac{12}{11}\cdot\frac{18}{17}\cdot\frac{54}{53}\cdot\frac{108}{107}\cdot\exp\left(\frac{3}{8744}+\frac{3}{1942}\right).
\end{equation}

Since $k\geq 46$ by \cite[Theorem 3.4]{SS}, we have
\begin{equation}\label{eq44}
\frac{\sigma^*(\sigma^*(N))}{\sigma^*(N)}\leq\frac{2^{46}+1}{2^{46}}\cdot\frac{3^{45}+1}{3^{45}}.
\end{equation}
Multiplying (\ref{eq43}) and (\ref{eq44}), we obtain
\begin{equation}
\begin{split}
2&=\frac{\sigma^*(\sigma^*(N))}{N}\\&\leq\frac{2^{46}+1}{2^{46}}\cdot\frac{3^{45}+1}{3^{45}}\cdot\frac{4}{3}\cdot\frac{6}{5}\cdot\frac{12}{11}\cdot\frac{18}{17}\cdot\frac{54}{53}\cdot\frac{108}{107}\cdot\exp\left(\frac{3}{8744}+\frac{3}{1942}\right)\\&\leq 1.9041\cdots<2,
\end{split}
\end{equation}
which is contradiction.
\end{proof}

It immediately follows from these two lemmas that $q\neq 3$.

\section{The remaining part}

The remaining case is the case $3\nmid \sigma^*(N)$, i.e., $q\neq 3$.

\begin{lem}\label{lm51}
Suppose $p_i$ is not a Mersenne prime. Then $p_i^{e_i}$ has the form $2^{a_i}\cdot q^{b_i}-1$
with positive integers $a_i\leq 2$ and $b_i$.  Moreover, for any integer $b$,
at most one of the pairs $(1, b)$ and $(2, b)$ appear in $(a_i, b_i)$'s.
\end{lem}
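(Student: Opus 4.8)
\emph{Proof plan.}
The first assertion is immediate from Lemma~\ref{lm32}. Since $p_i$ is assumed not to be a Mersenne prime, the exceptional alternative ``$p_i$ is a Mersenne prime and $e_i$ is odd'' of that lemma cannot occur, so Lemma~\ref{lm32} directly gives $p_i^{e_i}=2^{a_i}q^{b_i}-1$ with $a_i,b_i$ positive integers and $a_i\le 2$. I would dispose of this part in a single line.

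For the second assertion I would argue by contradiction, keeping in mind that the pairs $(a_i,b_i)$ under discussion are those attached to the \emph{non-Mersenne} prime powers $p_i^{e_i}$. Suppose that for some $b$ both $(1,b)$ and $(2,b)$ occur; since a single index furnishes only the one pair $(a_i,b_i)$, these must come from two different indices, say $p_i^{e_i}=2q^{b}-1$ and $p_j^{e_j}=4q^{b}-1$ with $i\neq j$ and $p_i,p_j$ both non-Mersenne. The plan is to show that $3$ divides one of the two numbers $2q^{b}-1$, $4q^{b}-1$, which forces the corresponding prime to be $3$; but $3=2^{2}-1$ is a Mersenne prime, contradicting the choice of $i$ or $j$.

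To get the divisibility by $3$ I would use that $q\neq 3$ (the standing hypothesis of this section), so $q$ is a unit modulo $3$ and $q^{b}\equiv 1$ or $2\pmod 3$; a one-line check then shows $3\mid 4q^{b}-1$ in the first case and $3\mid 2q^{b}-1$ in the second. Equivalently, $(2q^{b}-1)(4q^{b}-1)\equiv 2q^{2b}+1\equiv 0\pmod 3$, while $3$ cannot divide both factors since their difference $2q^{b}$ is prime to $3$ (this last remark is not actually needed). Whichever of $p_i^{e_i}$, $p_j^{e_j}$ is divisible by $3$ is a prime power, hence a power of $3$, so the prime in question is $3$, and the contradiction follows.

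I do not expect a genuine obstacle here: the argument is short, and its only delicate point is the bookkeeping fact that $3$ is counted as a Mersenne prime, which is exactly what makes the contradiction fire; one should also check that the residue split ``$q^{b}\equiv 1$ or $2\pmod 3$'' is exhaustive, which it is precisely because $q\neq 3$ has already been established in the previous section.
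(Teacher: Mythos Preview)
Your proposal is correct and follows essentially the same route as the paper: the first assertion is referred back to Lemma~\ref{lm32}, and the second is handled by observing that since $q\neq 3$ one of $2q^{b}-1$, $4q^{b}-1$ is divisible by $3$, forcing one of the two prime powers to be a power of the Mersenne prime $3$, a contradiction. Your write-up merely spells out the mod-$3$ casework that the paper leaves implicit.
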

\begin{proof}
The former part follows from Lemma \ref{lm32}.  Since $q\neq 3$, $3$ divides
at least one of $2\cdot q^{b}-1$ and $4\cdot q^{b}-1$.  If both pairs
$(a_i, b_i)=(1, b)$ and $(a_j, b_j)=(2, b)$ appear, then at least one of $p_i^{e_i}$
and $p_j^{e_j}$ must be a power of three, which violates the condition
that $p_i$ and $p_j$ are not Mersenne.
\end{proof}

\begin{lem}\label{lm52}
$q\leq 13$.  Furthermore, provided $f_2\geq 2$, we have $q=5$ or $q=7$.
\end{lem}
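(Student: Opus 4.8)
The plan is to play two evaluations of the ratio $\sigma^*(N)/N$ against each other. From $\sigma^*(\sigma^*(N))=\sigma^*(2^{f_1}q^{f_2})=(2^{f_1}+1)(q^{f_2}+1)=2N$ one gets the identity
\[
\frac{\sigma^*(N)}{N}=\frac{2^{f_1}q^{f_2}}{N}=\frac{2}{(1+2^{-f_1})(1+q^{-f_2})}.
\]
Since each $p_i^{e_i}+1$ is even, every $a_i$ is positive, so $f_1=\sum_i a_i\ge k\ge 3$ by Lemma~\ref{lm34}; together with $f_2\ge 1$ (Lemma~\ref{lm31}) this gives the lower bound $\sigma^*(N)/N\ge \tfrac{16q}{9(q+1)}$, and if moreover $f_2\ge 2$ the sharper bound $\sigma^*(N)/N\ge \tfrac{16q^2}{9(q^2+1)}$.

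For the upper bound I would write $\sigma^*(N)/N=\prod_i(1+p_i^{-e_i})$ and split the primes dividing $N$ into the Mersenne primes with odd exponent and all the rest. Writing a Mersenne prime as $2^s-1$ with $s$ prime, the first group contributes at most $\prod \tfrac{2^s}{2^s-1}$ over distinct $s$, hence less than $C<1.6131008$. For the remaining primes Lemma~\ref{lm32} gives $p_i^{e_i}=2^{a_i}q^{b_i}-1$ with $a_i\le 2$ and $b_i\ge 1$, while Lemma~\ref{lm51} guarantees that the exponents $b_i$ occurring are distinct; hence this group contributes at most $\prod_{b\ge1}\bigl(1+\tfrac{1}{2q^b-1}\bigr)$. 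Bounding this product in the manner of~(\ref{eq41}), by $\exp\bigl(\sum_{b\ge1}\tfrac{1}{2q^b-1}\bigr)<\exp\bigl(\tfrac{q}{(2q-1)(q-1)}\bigr)$, yields
\[
\frac{\sigma^*(N)}{N}<1.6131008\cdot\exp\!\left(\frac{q}{(2q-1)(q-1)}\right).
\]

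It then remains to compare the two bounds. The lower bound $\tfrac{16q}{9(q+1)}$ increases in $q$ and the displayed upper bound decreases in $q$, so for $q\ge 17$ it suffices to note that $\tfrac{16\cdot17}{9\cdot18}>1.679$ while $1.6131008\exp(17/528)<1.666$, a contradiction; hence $q\le 13$, and since $q$ is an odd prime different from $3$ we get $q\in\{5,7,11,13\}$. If in addition $f_2\ge 2$, then for $q=11$ and $q=13$ the sharper lower bounds $\tfrac{16\cdot121}{9\cdot122}>1.763$ and $\tfrac{16\cdot169}{9\cdot170}>1.767$ again exceed $1.6131008\exp(11/210)<1.700$ and $1.6131008\exp(13/300)<1.685$ respectively, so $q\in\{5,7\}$ in that case.

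The one point that needs care is making the upper bound for $\sigma^*(N)/N$ tight enough for these comparisons: Lemma~\ref{lm51} is exactly what lets the non-Mersenne contribution be indexed by the distinct exponents $b$ rather than by the primes themselves, so that each $b$ contributes a single factor $1+\tfrac1{2q^b-1}$; without it both $2q^b-1$ and $4q^b-1$ could occur for the same $b$ and the bound would roughly square, letting $q=13$ slip through. The margins above are genuinely thin — $q=13$ with $f_2=1$ is only barely not excluded, which is why only $q\le 13$, and not $q\le 7$, can be asserted at this stage — so the numerical estimates, though routine, must be carried out carefully.
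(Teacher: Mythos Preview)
Your argument is correct and is essentially the paper's own proof: both obtain the upper bound $\sigma^*(N)/N\le C\cdot\exp\bigl(q/((2q-1)(q-1))\bigr)$ via Lemma~\ref{lm51} and combine it with $f_1\ge 3$ from Lemma~\ref{lm34} to force $\tfrac{q^{f_2}+1}{q^{f_2}}\exp\bigl(q/((2q-1)(q-1))\bigr)\ge 16/(9C)$, which is exactly your comparison of lower and upper bounds rewritten. One small slip: the split should be ``Mersenne primes'' versus ``non-Mersenne primes'' rather than ``Mersenne primes with odd exponent'' versus ``the rest'', since Lemma~\ref{lm51} only asserts distinctness of the $b_i$ among the non-Mersenne $p_i$; a Mersenne prime with even exponent placed in your group~(B) could in principle share its $b$-value with a non-Mersenne prime, but moving all Mersenne primes (any exponent) into the $C$-bounded group fixes this at no cost, as $1+p_i^{-e_i}\le 1+p_i^{-1}$.
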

\begin{proof}
By Lemma \ref{lm51}, we have
\begin{equation}
\frac{\sigma^*(N)}{N}\leq C\cdot\left(\prod_{a=1}^{\infty}\frac{2\cdot q^a}{2\cdot q^a-1}\right).
\end{equation}
Since
$\prod_{a=1}^{\infty}2\cdot q^a/(2\cdot q^a-1)\leq\exp\left(q/\left\{(q-1)(2q-1)\right\}\right)$,
we have
\begin{equation}
\frac{\sigma^*(N)}{N}\leq C\cdot\exp\left(\frac{q}{(q-1)(2q-1)}\right).
\end{equation}

By Lemma \ref{lm34}, we have
\begin{equation}
\frac{\sigma^*(\sigma^*(N))}{\sigma^*(N)}\leq\frac{2^{f_1}+1}{2^{f_1}}\cdot\frac{q^{f_2}+1}{q^{f_2}}\leq\frac{2^3+1}{2^3}\cdot\frac{q^{f_2}+1}{q^{f_2}}.
\end{equation}

Combining these inequalities, we obtain
\begin{equation}
2\leq\frac{\sigma^*(\sigma^*(N))}{N}\leq\frac{2^3+1}{2^3}\cdot C\cdot\frac{q^{f_2}+1}{q^{f_2}}\cdot\exp\left(\frac{q}{(q-1)(2q-1)}\right).
\end{equation}

Hence
\begin{equation}
\frac{q^{f_2}+1}{q^{f_2}}\cdot\exp\left(\frac{q}{(q-1)(2q-1)}\right)\geq\frac{16}{9C}\geq 1.102087.
\end{equation}
This yields $q\leq 13$.  If $f_2\geq 2$, then this inequality yields $q\leq 7$.
\end{proof}

\begin{thm}\label{th53}
$q\neq 5$.
\end{thm}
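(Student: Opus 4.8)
The plan is to argue by contradiction: suppose $q=5$, so $\sigma^{*}(N)=2^{f_{1}}5^{f_{2}}$ with $f_{1}=\sum_{i}a_{i}\ge\omega(N)\ge 3$ (Lemmas \ref{lm32}, \ref{lm34}) and $f_{2}\ge 1$. The decisive first step is to pin down the admissible unitary prime-power divisors $p_{i}^{e_{i}}$ of $N$. Since $5\equiv 1\pmod 4$, the congruence $5\equiv 1\pmod{2e_{i}}$ in Lemma \ref{lm22} forces $e_{i}\le 2$. If $e_{i}=2$ then $p_{i}^{2}+1\equiv 2\pmod 4$, hence $p_{i}^{2}+1=2\cdot 5^{b_{i}}$; the only solutions with $b_{i}\le 2$ are $p_{i}\in\{3,7\}$, and any further solution has $p_{i}^{2}\ge 2\cdot 5^{3}-1=249$. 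If $e_{i}=1$ then either $b_{i}=0$, in which case $p_{i}$ is a Mersenne prime (Lemma \ref{lm24}), or $b_{i}\ge 1$, $p_{i}$ is non-Mersenne, and by Lemma \ref{lm32} $p_{i}=2\cdot 5^{b_{i}}-1$ or $p_{i}=4\cdot 5^{b_{i}}-1$ — the smallest such primes being $19,499,1249,\dots$. Consequently the factor of $\sigma^{*}(N)/N$ coming from the prime $3$ is at most $\tfrac43$, the one coming from $7$ is at most $\tfrac87$, and every other admissible factor is one of $1+\tfrac1{19}$, the numbers $1+\tfrac1p$ with $p\in\{499,1249,\dots\}$, and the numbers $1+\tfrac1{p^{2}}$ with $p^{2}\ge 249$.

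I first dispose of $f_{2}=1$. Here $\sigma^{*}(N)=2^{f_{1}}\cdot5$, so $2N=\sigma^{*}(\sigma^{*}(N))=6(2^{f_{1}}+1)$ and $N=3(2^{f_{1}}+1)$. By Lemma \ref{lm22}, $v_{3}(N)\le 2$, so $3\|N$ or $3^{2}\|N$; let $M$ be $N/3$ or $N/9$ respectively, so $M\mid 2^{f_{1}}+1$, $3\nmid M$, and $\sigma^{*}(M)$ equals $2^{f_{1}-2}\cdot5$ or $2^{f_{1}-1}$. Comparing $5$-adic valuations, every unitary prime-power divisor of $M$ is a Mersenne prime (Lemma \ref{lm24}), with at most one exception — occurring only when $3\|N$ — namely one equal to $19$. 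But for every odd prime $s$ the multiplicative order of $2$ modulo $2^{s}-1$ equals $s$, which is odd, so $2^{f_{1}}\equiv-1$ is insoluble modulo any Mersenne prime exceeding $3$; since also $3\nmid M$, no Mersenne prime divides $M$. Hence $M=1$, giving the excluded value $N=9$, or $M=19$ (only if $3\|N$), which forces $2^{f_{1}}+1=19$, impossible. This settles $f_{2}=1$.

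Now suppose $f_{2}\ge 2$. By the first paragraph together with the bound \eqref{eq31} for $C$,
\[
\frac{\sigma^{*}(N)}{N}\ \le\ C\left(1+\frac{1}{19}\right)\exp\left(\sum_{b\ge 3}\frac{3}{5^{b}}\right)\ <\ 1.76 .
\]
If $f_{1}\ge 4$, then $\sigma^{*}(\sigma^{*}(N))/\sigma^{*}(N)=(1+2^{-f_{1}})(1+5^{-f_{2}})\le\tfrac{17}{16}\cdot\tfrac{26}{25}<1.105$, so $\sigma^{*}(\sigma^{*}(N))/N<1.76\cdot1.105<2$, a contradiction. If $f_{1}=3$, then since $f_{1}=\sum_{i}a_{i}\ge\omega(N)\ge 3$ we must have $\omega(N)=3$ and $a_{1}=a_{2}=a_{3}=1$; by the classification of the first paragraph each $p_{i}^{e_{i}}$ then satisfies $p_{i}^{e_{i}}+1=2\cdot 5^{b_{i}}$, and the three smallest such prime powers, namely $9$, $49$ and $1249$, carry $b$-values $1,2,4$, so $f_{2}=\sum_{i}b_{i}\ge 7$; thus $\sigma^{*}(\sigma^{*}(N))/\sigma^{*}(N)\le(1+2^{-3})(1+5^{-7})<1.126$ and $\sigma^{*}(\sigma^{*}(N))/N<1.76\cdot1.126<2$, again a contradiction. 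Hence $q\ne 5$.

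The step I expect to be most delicate is the numerical bookkeeping in the last paragraph, since the crude bound on $\sigma^{*}(N)/N$ is only barely below $2$: the argument depends on the divisor classification of the first paragraph being genuinely exhaustive — in particular on the sparsity of primes of the form $2\cdot 5^{b}-1$ and $4\cdot 5^{b}-1$, and on controlling the solutions of $p^{2}+1=2\cdot 5^{b}$ with $b\ge 3$ (all of which have $p^{2}\ge 249$) — and on extracting the right lower bound for $\min(f_{1},f_{2})$, the key point being that $f_{1}=3$ already forces $f_{2}\ge 7$. The case $f_{2}=1$, by contrast, hinges on the clean observation that $2$ has odd order modulo every Mersenne prime $2^{s}-1$ with $s$ an odd prime.
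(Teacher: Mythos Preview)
Your proof is correct and follows a genuinely different route from the paper's. The paper's argument is driven by first eliminating $19\mid N$: since $19\mid 2N=(2^{f_1}+1)(5^{f_2}+1)$ forces $f_1\equiv 9\pmod{18}$ (the order of $2$ mod $19$ is $18$, and $5$ has odd order mod $19$), one gets $f_1\ge 9$ and $27\mid 2^{f_1}+1$, which shrinks the $3$-contribution and yields $\sigma^*(\sigma^*(N))/N<2$. With $19$ gone, the paper then asserts $7\nmid N$ (on the grounds that $7\nmid 2^x+1$ and $7\nmid 5^x+1$; note however that $7\mid 5^3+1$, so this step as written is not sound) and finishes with one global estimate. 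Your argument instead decomposes along $(f_1,f_2)$: for $f_2=1$ you solve $N=3(2^{f_1}+1)$ directly and exploit the clean fact that $2$ has odd order modulo every Mersenne prime $2^s-1$ with $s$ odd, so no such prime can divide $2^{f_1}+1$; for $f_2\ge 2$ you separate $f_1=3$ from $f_1\ge 4$, the decisive point being that $f_1=3$ forces $\omega(N)=3$ with all $a_i=1$, whence $f_2\ge 1+2+4=7$ since $2\cdot 5^3-1=249$ is not a prime power. What your approach buys is that it never needs the delicate exclusion of $7$; what the paper's approach buys (when repaired) is avoiding the explicit $f_2=1$ casework. One small remark: your closing sentence in the first paragraph (``every other admissible factor is one of \ldots'') omits the Mersenne primes $\ge 31$, but this is only a gap in the description --- your actual inequality in the third paragraph bounds all Mersenne contributions by the constant $C$, so the numerics are unaffected.
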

\begin{proof}
Suppose that $q=5$.  Then we have $p_i^{e_i}=2\cdot 5^{b_i}-1$ or
$p_i^{e_i}=4\cdot 5^{b_i}-1$ or $p_i$ is Mersenne.
Hence $p_i^{e_i}\in\{ 19, 499, 7812499, \ldots, 9, 49, 1249, \ldots,\linebreak[0] 3, 7, 31, 127, 8191, \ldots\}$.
We note that $9=3^2$ and $49=7^2$.

Let us assume that $19\mid N$.  Then $f_1\equiv 9\pmod{18}$ and hence $3^3\mid N$.
By (\ref{eq31}), we have
\begin{equation}
\frac{\sigma^*(N)}{N}\leq \frac{3}{4}\cdot\frac{28}{27}\cdot C\cdot\exp\left(\frac{5}{36}\right).
\end{equation}
Since $f_1\geq 9$, we have
\begin{equation}
\frac{\sigma^*(\sigma^*(N))}{N}\leq\frac{2^9+1}{2^9}\cdot\frac{6}{5}\cdot\frac{7}{9}\cdot C\cdot\exp\left(\frac{5}{36}\right)=1.7332\cdots <2,
\end{equation}
which is contradiction.  Thus $19$ cannot divide $N$.  From this we deduce that
if $p_i^{e_i}=2\cdot 5^{b_i}-1$ or $p_i^{e_i}=4\cdot 5^{b_i}-1$, then $b_i\geq 3$.

It is impossible that $7\mid N$ since $7$ does not divide $2^x+1$ or $5^x+1$
for any integer $x$.

Hence, by Lemma \ref{lm34} we have
\begin{equation}
\frac{\sigma^*(\sigma^*(N))}{N}\leq \frac{7}{8}\cdot C\cdot\exp\left(\frac{5}{4}\cdot\frac{250}{249}\right)\cdot\frac{6}{5}\cdot\frac{9}{8}<1.9150\cdots <2.
\end{equation}

So that, we cannot have $q=5$.
\end{proof}

\begin{thm}\label{th54}
$q\neq 7, 11, 13$.
\end{thm}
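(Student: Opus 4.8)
The plan is to handle the three remaining primes $q\in\{7,11,13\}$ in a way parallel to the treatment of $q=5$ in Theorem~\ref{th53}, exploiting more aggressively the constraint from Lemma~\ref{lm52} that $f_2\ge 2$ forces $q\in\{5,7\}$. First I would dispose of $q=11$ and $q=13$: for these we must have $f_2=1$, so $\sigma^*(N)=2^{f_1}q$ with $q+1$ not divisible by $4$; since $11+1=12$ is divisible by $4$ and $13+1=14\equiv 2\pmod 4$, the prime $q=11$ is immediately excluded by Lemma~\ref{lm31}, while for $q=13$ we have $\sigma^*(N)=2^{f_1}\cdot 13$ and $\sigma^*(\sigma^*(N))=(2^{f_1}+1)\cdot 14=2N$, so $N=7(2^{f_1}+1)$; then $7\mid N$, but $7$ divides neither $2^x+1$ nor $13^x+1$ for any $x$ (check orders: $o_7(2)=3$ gives $7\mid 2^x-1$ not $2^x+1$ since the relevant exponent would be odd; $o_7(13)=o_7(-1)=2$ so $7\mid 13+1=14$ — wait, this needs care), so I would instead argue via Lemma~\ref{lm32} that every non-Mersenne $p_i^{e_i}$ has the form $2^{a_i}13^{b_i}-1$ with $a_i\le 2$ and $b_i\ge 1$, bound $\sigma^*(N)/N$ by $C\cdot\exp(13/(12\cdot 25))$ times a correction, and combine with $f_1\ge 3$, $f_2=1$ to get $\sigma^*(\sigma^*(N))/N<2$.

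The substantive case is $q=7$. Here the admissible unitary divisor-prime-powers are $p_i^{e_i}\in\{2\cdot 7^{b}-1\}\cup\{4\cdot 7^{b}-1\}\cup\{\text{Mersenne prime powers}\}$, the first few being $13, 97, 685\ (=5\cdot137$, not a prime power, discard), $2\cdot7^2-1=97$, $2\cdot7^3-1=685$, $4\cdot7-1=27=3^3$, $4\cdot7^2-1=195$ (discard), $4\cdot7^3-1=1371$ (discard), together with $3,7,31,127,\dots$. I would first note $27=3^3$ and $13$ are the small dangerous ones. As in the $q=5$ proof, I would show that if $13\mid N$ then $2^{a_i}=2$ forces (by Lemma~\ref{lm26}, since $13\equiv 5\pmod 8$) a congruence on $f_1$ modulo $o_{13}(2)=12$; more precisely $13\mid N$ gives $v_2(\sigma^*(N))\ge 2$ with a specific residue, hence a large power of $3$ divides $N$, and then the product bound on $\sigma^*(N)/N$ drops enough to push $\sigma^*(\sigma^*(N))/N$ below $2$. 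Ruling out $13\mid N$ forces $b_i\ge 2$ (so $p_i^{e_i}\ge 97$) for the $2\cdot7^b-1$ family, tightening $\sigma^*(N)/N\le \tfrac{28}{27}C\cdot\exp(\text{small})$, which combined with $f_1\ge 3$ (Lemma~\ref{lm34}) and $q^{f_2}/(q^{f_2}+1)$ handled via $f_2\ge 1$ should close the case.

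The main obstacle I anticipate is the bookkeeping in the $q=7$ case: unlike $q=5$, where the relevant Mersenne prime $3$ appears with exponent forced large, here one must carefully track which of $27=3^3$, the Mersenne primes, and the $13$ can simultaneously divide $N$, and in particular whether $f_2$ can be $1$ (only one factor from the $7$-families) while still having $\omega(N)\ge 3$; if $f_2=1$ then essentially only one $p_i$ contributes a factor of $7$ to $\sigma^*(N)$ and all other $p_i$ are Mersenne, which is a very constrained situation needing its own estimate using $C<1.6132$ and $f_1\ge 3$. The delicate point is getting the exponential correction terms small enough: I would use the same device as in (\ref{eq31}) and the proof of Lemma~\ref{lm42}, namely $\prod_{i\ge i_0}\frac{cq^i}{cq^i-1}\le\exp\!\big(\frac{1}{cq^{i_0}-1}\cdot\frac{q}{q-1}\big)$, and verify numerically that the resulting product of $\tfrac{9}{8}$ (from $f_1\ge 3$), the $7$-family factors, $C$, and the correction stays strictly below $2$. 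I expect each sub-case to reduce to one explicit inequality of the form ``product of a handful of rationals times $C$ times $\exp(\text{tiny})<2$'', mirroring (\ref{eq43})–(\ref{eq44}).
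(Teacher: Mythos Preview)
Your shortcut for $q=11$ is correct and cleaner than the paper's: Lemma~\ref{lm52} forces $f_2=1$, and then $11^{1}+1=12$ contradicts Lemma~\ref{lm31} directly. The paper instead runs the same product--bound machinery as for $q=7$, so here you genuinely improve on it.

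For $q=7$, however, your plan has a real gap. You propose to mimic the $q=5$ argument by first showing $13\nmid N$ via ``$13\mid N$ forces a large power of $3$ to divide $N$''. This is false. If $13\mid N$ then $13\mid (2^{f_1}+1)(7^{f_2}+1)$; since $o_{13}(2)=o_{13}(7)=12$, this means $f_1\equiv 6\pmod{12}$ or $f_2\equiv 6\pmod{12}$. In the first case $f_1$ is even, whence $3\nmid 2^{f_1}+1$; and $3\nmid 7^{x}+1$ for any $x$ because $7\equiv 1\pmod 3$. So $13\mid N$ can actually force $3\nmid N$, the opposite of what you need. You also miss the simplification the paper exploits: $4\cdot 7^{b}-1\equiv 0\pmod 3$ for every $b$, so the $a_i=2$ family is \emph{entirely} absorbed into the Mersenne case (its only prime--power member is $27=3^3$), and one only has to bound $\prod_{b\ge 1}\frac{2\cdot 7^{b}}{2\cdot 7^{b}-1}$ together with the Mersenne product excluding $7$ (since $7\nmid N$). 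The paper then splits on how many of the $\ge 3$ prime powers are Mersenne versus of the form $2\cdot 7^{b}-1$ to control $(f_1,f_2)$ and hence $\sigma^*(\sigma^*(N))/\sigma^*(N)$; no ``$13\nmid N$'' step is needed at all.

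For $q=13$ you correctly reach $N=7(2^{f_1}+1)$ and correctly abandon the ``$7\nmid 13^x+1$'' idea (indeed $7\mid 14$). Your fallback to generic product bounds is underspecified; the paper's finish is sharper and does not use bounds: since $f_2=1$, exactly one $p_j^{e_j}$ equals $2^{a}\cdot 13-1$ with $a\le 2$, forcing $p_j^{e_j}=25$ and hence $5\mid 2^{f_1}+1$; but $\omega(N)\ge 3$ and the only Mersenne prime $\not\equiv 7\pmod 8$ is $3$, so $3\mid N$, whence $3\mid 2^{f_1}+1$ and $f_1$ is odd, contradicting $5\mid 2^{f_1}+1$.
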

\begin{proof}
Suppose $q=7$.  Observing that $4\cdot 7^b-1$ is divisible by $3$, we deduce
from Lemma \ref{lm32} that, for any $i$, $p_i$ is a Mersenne prime
or $p_i^{e_i}=2\cdot 7^{b_i}-1$.  By Lemma \ref{lm26}, $(2^{f_1}+1)(7^{f_2}+1)$
is not divisible by $7$.  Hence
\begin{equation}\label{eq541}
\begin{split}
\frac{\sigma^*(N)}{N}&\leq \frac{4}{3}\cdot\left(\prod_{i=2}^{\infty}\frac{2^{2i+1}}{2^{2i+1}-1}\right)
\cdot\left(\prod_{i=1}^{\infty}\frac{2\cdot 7^i}{2\cdot 7^i-1}\right)\\
&\leq\frac{4}{3}\cdot\exp\left(\frac{1}{31}\cdot\frac{4}{3}+\frac{1}{13}\cdot\frac{8}{7}\right).
\end{split}
\end{equation}

By Lemma \ref{lm34}, we have $k\geq 3$.  We deduce from Lemma \ref{lm32} that
we can take an integer $s$ with $1\leq s\leq 3$ for which the following statement holds:
there is at least $3-s$ indices $i$ such that $p_i$ is a Mersenne prime and $e_i$ is odd,
and there is at least $s$ indices $i$ such that $p_i^{e_i}=2\cdot 7^{b_i}-1$.
If $s=1$, then $f_1\geq 6$ and $f_2\geq 1$.  If $s=2$, then $f_1\geq 4$ and $f_2\geq 3$.
If $s=3$, then $f_1\geq 3$ and $f_2\geq 6$.
\begin{equation}\label{eq542}
\begin{split}
\frac{\sigma^*(\sigma^*(N))}{\sigma^*(N)}&\leq\max\left\{\frac{2^6+1}{2^6}\cdot\frac{8}{7}\cdot\frac{2^4+1}{2^4}\cdot\frac{7^3+1}{7^3}, \frac{2^3+1}{2^3}\cdot\frac{7^6+1}{7^6}\right\}\\
&\leq\frac{65}{56}.
\end{split}
\end{equation}
Combining two inequalities (\ref{eq541}) and (\ref{eq542}), we have
\begin{equation}
\frac{\sigma^*(\sigma^*(N))}{N}\leq\frac{65}{56}\cdot\frac{4}{3}\cdot\exp\left(\frac{1}{31}\cdot\frac{4}{3}+\frac{1}{13}\cdot\frac{8}{7}\right)=1.7604\cdots<2,
\end{equation}
which is contradiction.

Suppose $q=11$.  Observing that $2\cdot 11^{2b+1}-1$ and $4\cdot 11^{2b}-1$ is divisible by $3$,
we deduce from Lemma \ref{lm32} that, for any $i$, $p_i$ is a Mersenne prime
or $p_i^{e_i}=2^{a_i}\cdot 7^{b_i}-1$ with $a_i+b_i$ odd.
\begin{equation}
\begin{split}
\frac{\sigma^*(N)}{N}&\leq\frac{4}{3}\cdot\left(\prod_{i=2}^{\infty}\frac{2^{2i+1}}{2^{2i+1}-1}\right)
\cdot\left(\prod_{i=1}^{\infty}\frac{2\cdot 11^i}{2\cdot 11^i-1}\right)\\
&\leq\frac{4}{3}\cdot\exp\left(\frac{1}{31}\cdot\frac{4}{3}+\frac{1}{21}\cdot\frac{12}{11}\right).
\end{split}
\end{equation}

In a similar way to derive (\ref{eq542}), we obtain
\begin{equation}
\frac{\sigma^*(\sigma^*(N))}{\sigma^*(N)}\leq\frac{2^3+1}{2^3}\cdot\frac{11^6+1}{11^6}.
\end{equation}

Combining these inequalities, we have
\begin{equation}
\begin{split}
2=\frac{\sigma^*(\sigma^*(N))}{N}&\leq\frac{2^3+1}{2^3}\cdot\frac{11^6+1}{11^6}\cdot\frac{4}{3}
\cdot\frac{8}{7}\cdot\exp\left(\frac{1}{31}\cdot\frac{4}{3}+\frac{1}{21}\cdot\frac{12}{11}\right)\\
&\leq 1.8850\cdots<2,
\end{split}
\end{equation}
which is contradiction.

Suppose $q=13$.  $3\mid N$ and $3\nmid (q^{f_2}+1)$ since $q=13\equiv 1\pmod{3}$.
Hence $f_1$ must be odd.  Moreover, $f_2=1$ by Lemma \ref{lm52}.  Hence
$\sigma^*(N)=2^{f_1}\cdot 13$ and $N=7(2^{f_1}+1)$.
There is exactly one index $j$ such that $p_j^{e_j}$ is of the form
$2^{a}13^{b}-1$ for some positive integers $a, b$.  By Lemma \ref{lm32}, we have $a\leq 2$.  Moreover, we have $b=1$ since $b\leq f_2=1$.  Hence $p_j^{e_j}=25=5^2$.
Since $13^{f_2}+1=2\cdot 7$, $2^{f_1}+1$ must be divisible by $5$.  But this is
impossible since $f_1$ is odd.
\end{proof}

Now Theorem \ref{th11} is clear.  By Lemma \ref{lm52}, $q$ must be one of
$3, 5, 7, 11, 13$.  In the previous section, it is shown that $q\neq 3$.
Theorem \ref{th53} shows that $q\neq 5$.  Theorem \ref{th54} eliminates
the remaining possibilities.

{}
\vskip 12pt

{\small Tomohiro Yamada}\\
{\small Department of Mathematics\\Faculty of Science\\Kyoto University\\Kyoto, 606-8502\\Japan}\\
{\small e-mail: \protect\normalfont\ttfamily{tyamada@math.kyoto-u.ac.jp}}
\end{document}